\documentclass[leqno,twoside,draft]{article}


\usepackage{geometry} 

\usepackage{graphicx,tikz} 
\usetikzlibrary{snakes,arrows,decorations.markings,intersections}


\tikzset{mygrid/.style={ultra thin}}    
\tikzset{myaxis/.style={thick,->,>=stealth}}    
\tikzset{mymain/.style={ultra thick}}   
\tikzset{inpt/.style={fill=black,semithick}}    
\tikzset{outpt/.style={fill=white,semithick}}    
\tikzset{prline/.style={semithick}}     
\tikzset{intline/.style={very thin}}         
\tikzset{asint/.style={thin}}            
\tikzset{myint/.style={ultra thick, decorate, decoration={coil,aspect=0,segment length=5pt,amplitude=.5pt}}}    


\tikzset{freccia/.style={>=stealth,decoration={markings,mark=at position .5 with {\arrow{stealth}}},postaction={decorate}}}
\tikzset{setline/.style={very thick}}


\tikzstyle tratt=[dotted]

\usepackage{float,floatflt} 

\usepackage{xcolor} 
\usepackage{booktabs}
\usepackage{array,multirow,colortbl} 
\newcolumntype{G}{>{\columncolor[gray]{0.9}}c}
\newcolumntype{H}{>{\columncolor[gray]{0.9}}l}
\definecolor{graymed}{gray}{0.9}

\usepackage[margin=20pt]{caption} 

\DeclareCaptionLabelSeparator{punto}{. }
 \captionsetup{font=footnotesize, labelfont=bf}
 \captionsetup[figure]{labelsep=punto} 

\usepackage{microtype} 


\usepackage{enumitem}

\newlist{myenacomp}{enumerate}{10}
\setlist[myenacomp]{label=(\alph*),itemsep=.6ex,parsep=.3ex,topsep=1ex,partopsep=0ex}

\newlist{myen1comp}{enumerate}{10}
\setlist[myen1comp]{label=(\arabic*),itemsep=.6ex,parsep=.3ex,topsep=1ex,partopsep=0ex}

\newlist{myenicomp}{enumerate}{10}
\setlist[myenicomp]{label=(\roman*),itemsep=.6ex,parsep=.3ex,topsep=1ex,partopsep=0ex}

\newlist{myitbcomp}{itemize}{10}
\setlist[myitbcomp]{label=$\bullet$,itemsep=.6ex,parsep=.3ex,topsep=1ex,partopsep=0ex}

\newlist{mydcomp}{description}{10}
\setlist[mydcomp]{itemsep=.6ex,parsep=.3ex,topsep=1ex,partopsep=0ex}


\usepackage[latin1]{inputenc}
\usepackage[T1]{fontenc} 

\usepackage[english]{babel}

\usepackage{icomma} 
\usepackage{ellipsis} 

\usepackage{gensymb} 
\usepackage{eurosym}     


\usepackage{dingbat,bbding,harmony,phaistos} 






\DeclareMathAccent{\ring}{\mathalpha}{operators}{"17} 



\usepackage{amsmath} 
\usepackage{amssymb} 

\usepackage[ntheorem]{empheq} 

\usepackage[mathcal]{eucal} 
\usepackage{mathrsfs} 
\usepackage{upgreek} 

\usepackage{bbm,dsfont} 
\usepackage{mathbbol} 



\newcommand*{\ns}{\mathds}

\usepackage{braket} 
\newcommand*{\tc}{:}

\usepackage{mathtools} 

\DeclarePairedDelimiter{\abs}{\lvert}{\rvert}
\DeclarePairedDelimiter{\norm}{\lVert}{\rVert}
\DeclarePairedDelimiter{\uple}{\lparen}{\rparen}

\DeclarePairedDelimiter{\pg}{\{}{\}}
\DeclarePairedDelimiter{\pq}{\lbrack}{\rbrack}
\DeclarePairedDelimiter{\pt}{\lparen}{\rparen}
\DeclarePairedDelimiter{\pqt}{\lbrack}{\rparen}

\newcommand*{\sep}{,\,} 

\mathtoolsset{showmanualtags}

\newcommand*{\intaa}[3][]{\mathopen{#1 ]} #2 , #3 \mathclose{#1 [}} 

\newcommand*{\intca}[3][]{\mathopen{#1 [} #2 , #3 \mathclose{#1 [}}
\newcommand*{\intcc}[3][]{\mathopen{#1 [} #2 , #3 \mathclose{#1 ]}}


\newcommand*{\eqdef}{\vcentcolon=}

\newcommand{\eu}{\mathrm{e}}

\newcommand{\mpg}{\uppi}

\DeclareMathOperator{\nep}{\eu}

\DeclareMathOperator{\mpi}{\mpg}

\newcommand*{\clnf}{\colon} 









\newcommand*\dif{\mathop{}\!\mathrm{d}}
\newcommand*{\deriv}[3][]{\frac{\dif^{#1}#2}{\dif #3^{#1}}} 



\newcommand{\ve}{\boldsymbol} 
\newcommand*{\hor}{_{\mathrm h}} 
\newcommand*{\vrt}{_{3}} 
\newcommand*{\hf}[1]{\overline{#1}^{\mathrm h}} 
\newcommand*{\cs}{\mathrm} 
\newcommand*{\fs}{\mathscr} 
\newcommand*{\loc}{_{\mathrm{loc}}} 
\newcommand*{\trasl}[1]{_{+t}}

\newcommand*{\abset}{\mathcal B}
\newcommand*{\attr}{\mathcal A}




\usepackage{framed,mdframed}
\usepackage[framed,amsmath,thmmarks]{ntheorem} 

\theoremstyle{change}
\theoremheaderfont{\normalfont\bfseries\sffamily}
\theorembodyfont{\normalfont}
\theorempreskipamount1ex
\theorempostskipamount0ex
\theoremseparator{}
\theoremindent0cm
\theoremnumbering{arabic} 
\theoremsymbol{}

\newtheorem{theorem}{Theorem}[section]
\newtheorem{proposition}[theorem]{Proposition}
\newtheorem{lemma}[theorem]{Lemma}
%




\newtheorem{remark}[theorem]{Remark}

\theoremstyle{changebreak}

\theoremstyle{change}
\theorembodyfont{\normalfont\sffamily}

\theorembodyfont{\normalfont\slshape}

\newtheorem{definition}[theorem]{Definition}

\newcommand{\textdef}[1]{\textbf{#1}}

\theoremstyle{nonumberplain}
\theoremheaderfont{\normalfont\scshape}
\theorembodyfont{\normalfont}
\theoremseparator{.}
\theoremsymbol{\ensuremath{\blacksquare}}
\newtheorem{proof}{Proof}



%


\usepackage{verbatim}

\definecolor{refkey}{rgb}{1,1,1}
\definecolor{labelkey}{rgb}{0,0,1}
\definecolor{refkey}{rgb}{1,1,1}

\newcommand\D{\partial}
\newcommand{\bw}{\boldsymbol{w}}
\newcommand{\bbf}{\boldsymbol{f}}

\begin{document}

\title{Global Attractor for the Navier--Stokes Equations with
  horizontal filtering} \author{Luca Bisconti and Davide Catania}

\maketitle

\subsection*{Abstract}

We consider a Large Eddy Simulation model for a homogeneous
incompressible Newtonian fluid in a box space domain with periodic
boundary conditions on the lateral boundaries and homogeneous
Dirichlet conditions on the top and bottom boundaries, thus simulating
a horizontal channel. The model is obtained through the application of
an anisotropic horizontal filter, which is known to be less memory
consuming from a numerical point of view, but provides less regularity
with respect to the standard isotropic one defined as the inverse of
the Helmholtz operator.

It is known that there exists a unique regular weak solution to this
model that depends weakly continuously on the initial datum. We show
the existence of the global attractor for the semiflow given by the
time-shift in the space of paths. We prove the continuity of the
horizontal components of the flow under periodicity in all directions
and discuss the possibility to introduce a solution semiflow.

\medskip

\textbf{Keywords.} Navier--Stokes equations, anisotropic filters,
global attractor, time-shift semiflow, Large Eddy Simulation (LES),
turbulent flows in domains with boundary, approximate deconvolution
methods (ADM).

\textbf{Subjects.} 76D05; 35Q30, 76F65, 76D03.

\section{Introduction}
Incompressible fluids with constant density are described by the
Navier--Stokes equations
\begin{align}
  & \label{eq:NS-3D-a}
  \partial_t \ve u+\nabla\cdot\pt{\ve u\otimes \ve u}-
  \nu\Delta \ve u+\nabla\mpi=\ve f \, ,\\
  & \label{eq:NS-3D-b} \nabla\cdot \ve u = 0 \, ,
\end{align}
supplemented with initial and boundary conditions, where $\ve u(t, \ve
x)=(u_1, u_2, u_3)$ is the velocity field, $\pi(t, \ve x)$ denotes the
pressure, $\ve f (t, x) = (f_1, f_2, f_3)$ is the external force, and
$\nu > 0$ the kinematic viscosity.

In order to perform numerical simulations of the $3$-dimensional fluid
equations \eqref{eq:NS-3D-a}--\eqref{eq:NS-3D-b}, many regularization
methods have been proposed.  Among these models, in the recent years,
there has been a lot of interest, from both the point of view of the
pure and applied mathematicians, around the so called
``$\alpha$-models''.  These models are based on a filtering obtained
through the application of the inverse of the Helmholtz operator
\begin{equation} \label{eq:Helm} A=I-\alpha^2 \Delta\, ,
\end{equation}
where $\alpha >0$ is interpreted as a spatial filtering scale.

In this paper, we are concerned with a regularized model for the $3$D
Navier--Stokes equations derived by the introduction of a suitable
horizontal (anisotropic) differential filter and we prove the
existence of a global attractor for the corresponding time-shift
dynamical system in path-space.  Let us consider
\begin{gather*}
  \ve x = \uple{x_1 \sep x_2 \sep x_3}\, , \qquad \ve x\hor = \uple{x_1\sep x_2}\, , \\
  \partial_j=\partial_{x_j}\, , \qquad \Delta\hor
  = \partial_1^2+\partial_2^2\, , \qquad
  \nabla\hor=\uple{\partial_1\sep\partial_2}\, ,
\end{gather*}
where ``h'' stays for ``horizontal'' and, instead of choosing the
filter given by \eqref{eq:Helm}, we take into account the horizontal
filter given by (see \cite{BER:2012})
\begin{equation} \label{eq:Helm-hor} A\hor=I-\alpha^2 \Delta\hor
  \qquad \textrm{ with } \alpha>0\, .
\end{equation}
As discussed in \cite{ALI:2013, GERM:1986, LAYT:2003}, from the point
of view of the numerical simulations, this filter is less memory
consuming with respect to the standard one.  Another significant
advantage of this choice is that there is no need to introduce
artificial boundary conditions for the Helmholtz operator.

The idea behind anisotropic differential filters can be traced back to
the approach used by Germano \cite{GERM:1986}. Recently, the Large
Eddy Simulation (LES) community has manifested interest in models
involving such a kind of filtering (e.g, \cite{ALI:2013,
  BE-IL-LAY:2006, DEA:2006, SCO:1993}) and the connection with the
family of $\alpha$-models has been highlighted and investigated by
Berselli in \cite{BER:2012}. Exploiting the smoothing provided by the
horizontal filtering \eqref{eq:Helm-hor}, Berselli has proved global
existence and uniqueness of a proper class of weak solutions to the
considered regularized model (see the system of equations
\eqref{third.1}--\eqref{third.2} below). Again, motivated by
\cite{BER:2012}, the authors of \cite{BER-CAT:2013,BER-CAT:2014} have given a
considerable mathematical support to the well-posedness of
initial-boundary value problems, in suitable anisotropic Sobolev
spaces, to the $3$D~Boussinesq equations with horizontal filter for
turbulent flows.

In the sequel, we mainly consider the domain
\begin{align*}
  D = \set{\ve x \in \ns R^3 \tc -\mpi L<x_1, x_2<\mpi L, -d<x_3<d}\,
  ,
\end{align*}
$L>0$, with $2\mpi L$ periodicity with respect to $\ve x\hor$
(i.e. with respect to $x_1, x_2$), and homogeneous Dirichlet boundary
conditions on $\mathit\Gamma \eqdef  \set{\ve x\in\ns R^3
  \tc -\mpi L<x_1, x_2<\mpi L,\, x_3=\pm d}$.  Observe that the filter
given by \eqref{eq:Helm-hor} is acting just on the horizontal
variables, so it makes sense to require the periodicity only in $\ve
x\hor = \uple{x_1 \sep x_2}$.

Set $\ve w = \hf{\ve u}= A\hor^{-1}\ve u$ and
$q=\hf{\mpi}=A\hor^{-1}\mpi$, so that $\ve u = A\hor\ve w$. Then,
applying the horizontal filter ``$\overline{{\,\,\,\,}^{{}^{}}}^{\text{h}}$''
component by component to the various fields and tensor fields in
\eqref{eq:NS-3D-a}--\eqref{eq:NS-3D-b}, and solving the interior
closure problem by the approximation
\begin{equation} \label{eq:closure} \hf{\ve u\otimes\ve u}\approx
  \hf{\hf{\ve u}\otimes\hf{\ve u}}=\hf{\ve w\otimes\ve w}\, ,
\end{equation}
we finally get the regularized model
\begin{align}
  & \label{third.1}
  \partial_t \ve w+\nabla\cdot\hf{\pt{\ve w\otimes \ve w}}-
  \nu\Delta \ve w+\nabla q=\hf{\ve f} \, ,\\
  & \label{third.2} \nabla\cdot \ve w = 0 \, .
\end{align}
Here, we assume homogeneous Dirichlet boundary conditions on
$\mathit\Gamma $ for the filtered fields as well as for the
unfiltered ones, in order to prevent the introduction of artificial
boundary conditions, and impose the initial datum $\ve w|_{t=0} =\ve
w_0$ for the filtered velocity field $\ve w$.

Let us note that this model represents a special case of Approximate
Deconvolution LES Model (ADM), see Adams--Stolz \cite{ADA-STO:2001},
when the order of deconvolution is zero. We refer to
\cite{BER:2012,ALI:2013,BIS:2013,BE-CA-LE:2013} for some recent
results in this context concerning general orders of deconvolution.

Our aim is to prove the existence of an attractor in the class of
\emph{regular weak solutions} (see below for details) to the
horizontally filtered model \eqref{third.1}--\eqref{third.2}. However,
the present case does not seem to fit the classical theory of
attractors (see, e.g., \cite{BA-VI:1992}) and a different scheme is
needed to carry out our analysis.  In fact, despite the smoothing
created by the horizontal filter, the regularity of the considered
weak solutions does not ensure the continuous dependence on their
initial data, even in the fully periodic setting (the dependence on
the initial data is only weakly continuous). Hence, the standard
dynamical theory fails to apply to this situation since the strong
continuity on the initial data is needed to get the continuity of the
solution semiflow.

To overcome this problem, we follow the approach proposed by Sell
\cite{Sell:JDDE}: in this case, the dynamics becomes the time-shift in
the space of paths, and the attractor is a suitable compact set that
attracts the regular weak solutions under the action of the time-shift
$S(t)\ve w(\cdot)=\ve w(t+\cdot)$.

Let $\mathcal W$ be the space of the regular weak solutions to
\eqref{third.1}--\eqref{third.2}, denote by $H$ and $V$ the usual
function spaces of fluid dynamics (see Subsection~\ref{subsec.fs}),
and set
\begin{align*}
  V\hor &=\set{\ve\phi \in H \tc \nabla\hor
    \ve\phi \in \cs L^2(D)}\, , \\
  \cs H\hor^2 &= \set{\ve \phi \in V\hor \tc \nabla\hor\nabla\phi \in
    \cs L^2(D)}\, .
\end{align*}
We prove the following result (see Section~\ref{sec.attractor}).

\begin{theorem} \label{th.tsattr} Given $\ve w_0 \in V\hor$ and $\ve f
  = \ve f(\ve x)\in (V\cap \cs H^2\hor)^*$, the time-shift $S(t)$ in
  $\mathcal W$ has a unique global attractor.
\end{theorem}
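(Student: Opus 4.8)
The plan is to follow the trajectory-space (time-shift) approach of Sell \cite{Sell:JDDE}, realizing $\mathcal W$ as a complete metric space on which $S(t)$ acts as a continuous semiflow, and then to verify the three classical ingredients guaranteeing a global attractor: continuity of the semiflow, existence of a bounded absorbing set, and asymptotic compactness. Since $\ve f=\ve f(\ve x)$ is time-independent, the system \eqref{third.1}--\eqref{third.2} is autonomous, so every time-shift $S(t)\ve w(\cdot)=\ve w(t+\cdot)$ of a regular weak solution is again a regular weak solution; hence $S(t)\colon\mathcal W\to\mathcal W$ is well defined and $\{S(t)\}_{t\ge0}$ is a semigroup. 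I would topologize $\mathcal W$ by locally uniform convergence on compact time intervals — concretely by a Fr\'echet-type metric induced by the norms of $C([0,T];H)$ and $L^2(0,T;V)$ for all $T>0$ — for which continuity of $S(t)$ is automatic. The first genuine point is that $\mathcal W$ is \emph{closed} under such limits: a locally uniform limit of regular weak solutions is again one, which follows by passing to the limit in the weak formulation using the uniform bounds below together with the weakly continuous dependence on the datum already established.

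Second, I would derive uniform-in-time a priori bounds yielding dissipativity. The natural multiplier is $\ve u=A\hor\ve w$: testing \eqref{third.1} with it makes the filtered convective term reduce to the standard trilinear form, which vanishes by incompressibility, while the time-derivative term becomes $\tfrac12\tfrac{\dif}{\dif t}\norm{\ve w}_{V\hor}^2$ and the viscous term produces $\nu\norm{\ve w}_{V\cap\cs H^2\hor}^2$ (up to equivalence). Pairing the force as $\pt{\hf{\ve f},A\hor\ve w}=\pt{\ve f,\ve w}$ — which is exactly why the hypothesis $\ve f\in(V\cap\cs H^2\hor)^*$ is the right one — and using Young's inequality gives
\[
\frac{\dif}{\dif t}\norm{\ve w}_{V\hor}^2+\nu\norm{\ve w}_{V\cap\cs H^2\hor}^2\le C\norm{\ve f}_{(V\cap\cs H^2\hor)^*}^2 .
\]
The homogeneous Dirichlet condition on $\mathit\Gamma$ yields a Poincar\'e inequality in $x_3$, so $\norm{\ve w}_{V\cap\cs H^2\hor}^2$ dominates $\norm{\ve w}_{V\hor}^2$; Gronwall's lemma then produces an absorbing ball with $\limsup_{t\to\infty}\norm{\ve w(t)}_{V\hor}^2\le\rho^2$, together with the corresponding time-integrated control of $\ve w$ in $L^2(0,T;V\cap\cs H^2\hor)$. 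I would take as absorbing set $\mathcal B\subset\mathcal W$ the trajectories lying in this ball for all times and check that the shifts of any bounded family enter $\mathcal B$ after a finite time.

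Third — and this is where I expect the main obstacle — I would prove asymptotic compactness of $\{S(t)\}$. From the uniform bounds a sequence of shifted solutions is bounded in $L^\infty(0,T;V\hor)\cap L^2(0,T;V\cap\cs H^2\hor)$, and from the equation the time derivatives $\partial_t\ve w$ are bounded in some $L^p(0,T;(V\cap\cs H^2\hor)^*)$. An Aubin--Lions--Simon argument then gives a subsequence converging strongly in $L^2(0,T;H)$ and in $C([0,T];H_{\mathrm w})$ for every $T$, hence in the trajectory metric, and the limit lies in $\mathcal W$ by the closedness step above. The delicate part is the control of the filtered nonlinearity $\nabla\cdot\hf{\pt{\ve w\otimes\ve w}}$ in the dual norm and the uniformity of the resulting derivative bound: because $A\hor$ smooths only in the horizontal directions $x_1,x_2$, one cannot invoke isotropic elliptic gain in $x_3$, so the estimates must be closed by exploiting the \emph{full} viscous dissipation $-\nu\Delta\ve w$, which acts in all three directions, in combination with the horizontal regularity supplied by the filter, via anisotropic interpolation between $V\hor$ and $\cs H^2\hor$.

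Finally, having a continuous semiflow on the complete metric space $\mathcal W$, a bounded absorbing set $\mathcal B$, and asymptotic compactness, I would invoke the existence theorem for global attractors in this metric (trajectory) setting (Sell \cite{Sell:JDDE}; cf.\ \cite{BA-VI:1992}) to conclude that
\[
\mathcal A=\omega(\mathcal B)=\bigcap_{s\ge0}\overline{\bigcup_{t\ge s}S(t)\mathcal B}
\]
is a compact, $S(t)$-invariant set that attracts all bounded subsets of $\mathcal W$. Its maximality among bounded invariant sets — equivalently, its characterization as the $\omega$-limit of the absorbing set — gives uniqueness, completing the proof of Theorem~\ref{th.tsattr}.
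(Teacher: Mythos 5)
Your proposal follows the same overall strategy as the paper: Sell's trajectory-space dynamics, the key energy estimate obtained by testing with $A\hor\ve w$ (so that the filtered nonlinearity vanishes and $\uple{\hf{\ve f}\sep A\hor\ve w}=\uple{\ve f\sep\ve w}$), a bounded absorbing set via Poincar\'e and Gronwall, and Aubin--Lions compactness. The genuine gap lies in the combination of your choice of metric with the compactness step. You metrize $\mathcal W$ with the norms of $\fs C\pt{\intcc{0}{T};H}$ and $\cs L^2\pt{0,T;V}$, but your compactness argument produces only strong convergence in $\cs L^2\pt{0,T;H}$ and convergence in $\fs C_{\mathrm w}\pt{\intcc{0}{T};H}$; the words ``hence in the trajectory metric'' are exactly where the proof breaks, since neither of these implies convergence in $\fs C\pt{\intcc{0}{T};H}$ (strong) or in $\cs L^2\pt{0,T;V}$.

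Moreover, this mismatch cannot be repaired by a sharper interpolation argument, because the obstruction is structural. The uniform bounds available on bounded sets of $\mathcal W$ are $\cs L^\infty\pt{0,\infty;V\hor}\cap \cs L^2\loc\pqt{0,\infty;V\cap\cs H^2\hor}$ plus a time-derivative bound in a dual space, so Aubin--Lions requires the spatial space appearing in the metric to be one into which $V\cap\cs H^2\hor$ embeds \emph{compactly}. Now $V\cap\cs H^2\hor\hookrightarrow V\hor$ is compact (both $\ve\phi$ and $\nabla\hor\ve\phi$ are bounded in $\cs H^1$, so Rellich applies to each), but $V\cap\cs H^2\hor\hookrightarrow V$ is \emph{not} compact: membership in $V\cap\cs H^2\hor$ gives no control of second vertical derivatives, so $\partial_3\ve w\hor$ gains no regularity in $x_3$, and sequences oscillating in $x_3$ (e.g. $n^{-1}\sin(nx_3)\cos(x_1/L)\,\ve e_2$) are bounded in $V\cap\cs H^2\hor$ with no subsequence whose gradient converges strongly in $\cs L^2$. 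Likewise $V\hor\hookrightarrow H$ is not compact, so uniform-in-time strong $H$-convergence is also unattainable. The full viscous dissipation you invoke only bounds $\nabla\ve w$ in $\cs L^2\cs L^2$; it does not manufacture the missing vertical derivatives. This is precisely why the paper takes the Fr\'echet metric induced by $\cs L^2\loc\pqt{0,\infty;V\hor}$: with that weaker metric, Aubin--Lions (with bounds in $\cs L^2\loc\pqt{0,\infty;V\cap\cs H^2\hor}$ and $\cs H^1\loc\pqt{0,\infty;V^*}$) gives precompactness of $S(t)B$, and closedness of $\mathcal W$ finishes the argument. Note also that with this weaker metric the continuity of the shift semiflow is no longer ``automatic'': the paper proves it following Sell's Lemma~7, approximating trajectories by $\fs C^1$ functions to obtain translation continuity. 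If you replace your metric by the paper's $\cs L^2\loc\pqt{0,\infty;V\hor}$ metric and rerun your remaining steps, the proof goes through.
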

Here, for the sake of simplicity, we assume that the forcing term $\ve
f$ is independent of time.

\medskip

In the last part of the paper, we consider problem
\eqref{third.1}--\eqref{third.2} in the fully space-periodic
setting. Notice that, following analogous computations,
Theorem~\ref{th.tsattr} can be readily adapted to the simplified
situation given by space-periodic boundary conditions in all
directions.

We want to discuss the possibility to obtain an analogous result for
the semigroup $\pt[\big]{S(t)}_{t\geq 0}$ on $V\hor$, where
$S(t)\clnf V\hor\rightarrow V\hor$ is given by $S(t)\ve w_0=\ve
w(t,\cdot)$, $\ve w_0\in V\hor$, and $\ve w(t,\cdot)$ is the solution
at time $t$ to \eqref{third.1}--\eqref{third.2} corresponding to the
initial datum $\ve w_0$.
In such a case, the main issue is to prove that $S(t)$ effectively
defines a semiflow (the \emph{solution semiflow}) and, in particular,
that $S(t)$ is a continuous operator.  In fact, the regular weak
solutions to \eqref{third.1}--\eqref{third.2} depend just in a weakly
continuously way on their initial data, giving no guarantees on the
continuity of $S(t)$.

In Section~\ref{sec.periodic}, we show that the horizontal components
of the flow, i.e. $\ve w\hor$, depend continuously on the initial
datum, proving the following.

\begin{proposition} \label{prop.wh} If $\ve w$ is a regular weak
  solution to \eqref{third.1}--\eqref{third.2} under the periodic
  setting, then $\ve w\hor \in \fs C^0\pt{\intca{0}{T};V\hor}$ for
  each $T>0$.
\end{proposition}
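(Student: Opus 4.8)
The plan is to exhibit $\ve w\hor$ as an element of a Gelfand triple with pivot $V\hor$ and to conclude by the classical Lions--Magenes interpolation lemma (see, e.g., Temam): if $X\hookrightarrow V\hor\hookrightarrow X^*$ are dense continuous embeddings of Hilbert spaces and $\ve w\hor\in\cs L^2\pt{\intca 0 T;X}$ with $\partial_t\ve w\hor\in\cs L^2\pt{\intca 0 T;X^*}$, then $\ve w\hor\in\fs C^0\pt{\intca 0 T;V\hor}$. The assumption $\ve f\in\pt{V\cap\cs H\hor^2}^*$ points to the choice $X\eqdef V\cap\cs H\hor^2$, which is exactly the space of test fields of the weak formulation. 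I would work in the periodic setting throughout, so that $A\hor^{-1}$, $\Delta\hor$, $\Delta$ and the Leray projector $\ns P$ are commuting Fourier multipliers, all integrations by parts are boundary-free, and $A\hor^{-1}$ is a self-adjoint contraction commuting with every derivative. The first step is to write the equation for $\ve w\hor$ by taking the first two components of \eqref{third.1}; the pressure is handled either by $\ns P$ or, equivalently, by recovering $q$ from the Poisson identity $\Delta q=\nabla\cdot\hf{\ve f}-\nabla\cdot\nabla\cdot\hf{\pt{\ve w\otimes\ve w}}$ obtained by applying $\nabla\cdot$ to \eqref{third.1} and using \eqref{third.2}, so that its contribution is reduced to an estimate of the convective term.

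Next I would collect the two ingredients. The inclusion $\ve w\hor\in\cs L^2\pt{\intca 0 T;X}$ is read off from the regularity built into the notion of regular weak solution: the viscous term yields $\ve w\in\cs L^2\pt{\intca 0 T;V}$ (the full gradient), while testing with $-\Delta\hor\ve w$ together with the horizontal smoothing yields $\ve w\in\cs L^2\pt{\intca 0 T;\cs H\hor^2}$, and restricting to horizontal components only discards one scalar entry. The heart of the matter is $\partial_t\ve w\hor\in\cs L^2\pt{\intca 0 T;X^*}$, which I would prove by testing the equation against $\ve\phi\in X$, $\norm{\ve\phi}_X\le1$, and bounding each term by a coefficient in $\cs L^2\pt{\intca 0 T}$. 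The viscous term is controlled by integration by parts, using $\nabla\ve w\in\cs L^2$ and $\Delta\hor\ve\phi\in\cs L^2$; the forcing term is moved onto the test field by self-adjointness, $\pa{A\hor^{-1}\ve f\sep\ve\phi}=\pa{\ve f\sep A\hor^{-1}\ve\phi}\le\norm{\ve f}_{X^*}\norm{A\hor^{-1}\ve\phi}_X\le\norm{\ve f}_{X^*}$, which is precisely where $\ve f\in X^*$ enters; and the pressure, by the reduction above, is subsumed into the convective estimate.

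The main obstacle is the convective term. Transferring the self-adjoint filter to the test field, a typical horizontal contribution reads $-\pa{w_iw_j\sep A\hor^{-1}\partial_j\phi_i}$ with $i\in\pg{1,2}$. For horizontal derivatives $j\in\pg{1,2}$ the multiplier $A\hor^{-1}\partial_j$ gains one horizontal order, and the pairing is dominated by $\norm{\ve w}_{\cs L^4}^2\norm{\ve\phi}$. The point is that the naive isotropic Gagliardo--Nirenberg bound only places $\norm{\ve w}_{\cs L^4}^2$ in $\cs L^{4/3}\pt{\intca 0 T}$, which is not enough; one must instead use an anisotropic Ladyzhenskaya-type inequality exploiting $\ve w\in\cs L^\infty\pt{\intca 0 T;V\hor}\cap\cs L^2\pt{\intca 0 T;\cs H\hor^2}$ to recover the missing horizontal regularity and reach $\cs L^2\pt{\intca 0 T}$. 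The genuinely delicate terms carry a vertical derivative $j=3$, where $A\hor^{-1}$ gives no smoothing; here one uses the incompressibility relation $\partial_3 w_3=-\nabla\hor\cdot\ve w\hor$ from \eqref{third.2} to trade each vertical derivative falling on $w_3$ for horizontal derivatives of $\ve w\hor$, which the filter and the $\cs H\hor^2$ bound do control. This manipulation closes the horizontal-component estimates but is unavailable for the equation of $w_3$ itself, which is exactly why the statement is confined to $\ve w\hor$. With $\partial_t\ve w\hor\in\cs L^2\pt{\intca 0 T;X^*}$ established, the Lions--Magenes lemma on $V\cap\cs H\hor^2\hookrightarrow V\hor\hookrightarrow\pt{V\cap\cs H\hor^2}^*$ yields $\ve w\hor\in\fs C^0\pt{\intca 0 T;V\hor}$ for every $T>0$. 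Equivalently, the same bounds legitimize the energy identity for $\tfrac12\tfrac{\dif}{\dif t}\norm{\ve w\hor}_{V\hor}^2$, so that $t\mapsto\norm{\ve w\hor(t)}_{V\hor}$ is continuous; combined with the weak continuity supplied by the $\cs L^\infty\pt{\intca 0 T;V\hor}$ bound, this again gives strong continuity in $V\hor$.
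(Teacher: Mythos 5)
Your overall toolkit (Lions--Magenes interpolation, pressure--Poisson reduction, anisotropic Ladyzhenskaya bounds, the trick $\partial_3 w_3=-\nabla\hor\cdot\ve w\hor$) is close in spirit to the paper's, but there is a genuine gap in the duality bookkeeping, and it sits exactly at the step that carries the whole statement. You set up the triple $X\eqdef V\cap\cs H\hor^2\hookrightarrow V\hor\hookrightarrow X^*$ with \emph{pivot} $V\hor$, but you then verify $\partial_t\ve w\hor\in\cs L^2\pt{0,T;X^*}$ by pairing the equation with $\ve\phi\in X$ through the \emph{$\cs L^2$ (distributional) pairing} --- e.g. $\uple{A\hor^{-1}\ve f\sep\ve\phi}=\uple{\ve f\sep A\hor^{-1}\ve\phi}$. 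In a Gelfand triple the pivot is not a free label: it is the inner product through which the middle space is embedded into the dual, and the interpolation lemma returns continuity precisely in the middle space of the triple that your estimates actually realize. Since your bounds control the functional $\ve\phi\mapsto\uple{\partial_t\ve w\hor\sep\ve\phi}$ (the $\cs L^2$ pairing), the triple you have realized is $X\subset H\subset X^*$ with pivot $H$, and the conclusion is only $\ve w\hor\in\fs C^0\pt{\intca{0}{T};H}$ --- which is already known (Remark~\ref{rem:cont}) and strictly weaker than the claim. To conclude in $V\hor$ you must instead bound $\ve\phi\mapsto\uple{\partial_t\ve w\hor\sep\ve\phi}_{V\hor}=\uple{\partial_t\ve w\hor\sep A\hor\ve\phi}$, i.e.\ you need control of $\partial_t\nabla\hor\ve w\hor$ (one extra horizontal derivative on the time derivative), not of $\partial_t\ve w\hor$ alone. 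The same objection defeats your fallback argument: legitimizing the energy identity for $\tfrac{\dif}{\dif t}\norm{\ve w\hor}_{V\hor}^2$ requires pairing $\partial_t\ve w\hor$ with $A\hor\ve w\hor$, which does not belong to $X$, so your estimates do not justify it either.

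This extra derivative is exactly where the paper works harder. It first proves a pressure lemma, $\nabla\hor q\in\cs L^2\pt{0,T;\cs L^2(D)}$ (take the divergence of \eqref{third.1}, apply $A\hor$, use elliptic estimates and $\norm{\ve w\otimes\ve w}_{\cs H^{-1}}\leq C\norm{\ve w}\,\norm{\nabla\ve w}$); it then shows that every term on the righthand side of the equation for $\ve w\hor$ lies in $\cs L^2\cs L^2$, so that $\partial_t\ve w\hor\in\cs L^2\cs L^2$ and hence $\partial_t\nabla\hor\ve w\hor\in\cs L^2\pt{0,T;V^*}$; finally, the interpolation lemma is applied not to $\ve w\hor$ but to the differentiated quantity $\nabla\hor\ve w\hor$ in the standard triple $V\subset H\subset V^*$, giving $\nabla\hor\ve w\hor\in\fs C^0 H$, which together with $\ve w\in\fs C^0 H$ is precisely $\ve w\hor\in\fs C^0 V\hor$. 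Your computation could be repaired along the same lines: if you test against $A\hor\ve\phi$ rather than $\ve\phi$ (i.e.\ use the $V\hor$-pairing), the filter cancels in the convective term, $\uple{A\hor^{-1}\nabla\cdot\pt{\ve w\otimes\ve w}\sep A\hor\ve\phi}=-\uple{\ve w\otimes\ve w\sep\nabla\ve\phi}$, the forcing term reduces to $\uple{\ve f\sep\ve\phi}$, and your anisotropic estimates close the argument; but these are different (and stronger) bounds than the ones you wrote, and the pressure term then genuinely requires the quantitative lemma on $\nabla\hor q$ rather than disappearing under the Leray projector.
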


\smallskip

Observe that the continuity of the solution operator $S(t)$ could be
shown by proving that $\partial_t \nabla\hor\ve w \in \cs L^2
V^*$; nevertheless, we only have that $\partial_t \nabla\hor\ve w\hor \in
\cs L^2 V^*$.
However, since the considered problem admits a unique regular weak
solution, we may wonder whether it is possible to get more regularity
for such a solution, also in the vertical component, by exploiting
again the special features of the fully periodic space-domain.  To
this end, we give an improved regularity result (see
Theorem~\ref{thm:estimate-H2_h} below) which actually shows that,
although the regularization created by the filter is strong in the
horizontal components (and in the derivatives with respect to the
horizontal components), this smoothing is not so effectively
transported to the vertical component, even in the space-periodic
case.  Though Theorem~\ref{thm:estimate-H2_h} is not directly useful
to prove the continuity of $S(t)$ (in fact, it might be more
appropriate in order to get compactness properties of $S(t)$), it
seems interesting by itself and we report it, at the end of
Section~\ref{sec.periodic}, for the reader's convenience.

Because of all the above facts, different techniques seem necessary to
get the continuity of the solution operator $S(t)$ associated to
\eqref{third.1}--\eqref{third.2}, and to prove the existence of the
global attractor in such a case.  We will address these issues in a
future paper, in which we will study more thoroughly the dynamics
associated to problem \eqref{third.1}--\eqref{third.2}, possibly
supplemented with homogeneous Dirichlet boundary conditions.

\subsection{Plan of the paper} In Section \ref{sec.setting} we
describe the functional setting, the horizontal filter and the notion
of regular weak solution, and we recall a known result
concerning the existence and uniqueness of such solutions.
In Section~\ref{sec.results}, we recall the notion of global attractor and
describe the main results. Section~\ref{sec.estimates} is devoted to
the proof of some basic estimates that will be used subsequently. In
Section~\ref{sec.attractor}, under horizontal periodicity and
homogeneous Dirichlet boundary conditions on $\mathit\Gamma$, the
existence of the global attractor defined through the shifting
semiflow is given. Finally, in Section~\ref{sec.periodic}, when the domain
is periodic in all directions, we prove the
continuity of the horizontal components of the flow,  and we also provide an improved
regularity result for the solutions to \eqref{third.1}--\eqref{third.2}, i.e.
Theorem~\ref{thm:estimate-H2_h}.

\section{Functional setting and anisotropic
  filtering} \label{sec.setting}

\subsection{Function spaces} \label{subsec.fs}

We introduce the following function spaces:
\begin{align*}
  & \cs L^2(D) \eqdef \set{\phi\clnf D \rightarrow \ns R \text{
      measurable, } 2\pi L \text{ periodic in } \ve x\hor, \, \int_D
    \abs{\phi}^2 \dif \ve x < +\infty},
  \\
  & \cs L^2_0(D) \eqdef \set{\phi \in \cs L^2(D) \text{ with zero mean
      with respect to } \ve x\hor} ,
  \\
  & H \eqdef \set{\ve\phi \in (\cs L^2_0(D))^{3} \tc
    \nabla\cdot\ve\phi=0 \text{ in } D, \, \ve\phi\cdot \ve n = 0
    \text{ on } \mathit\Gamma},
\end{align*}
($\ve n$ is the outward normal to $\mathit\Gamma$), all with $\cs L^2
$ norm denoted by $\norm{\cdot}$, and scalar product
$\uple{\cdot\sep\cdot}$ in $\cs L^2$. Here and in the following, we
use the same notation for scalar and vector valued functions, when
they make sense. Moreover, we set
\begin{align*}
  & V\hor \eqdef \set{\ve\phi \in H \tc \nabla\hor \ve\phi \in \cs
    L^2(D)}\, ,
  \\
  & V \eqdef \set{\ve\phi \in H \tc \nabla\ve\phi\in \cs L^2(D) \text{
      and } \ve\phi=\ve 0 \text{ on } \mathit\Gamma\,}\, ,
  \\
  & \cs H\hor^2 \eqdef \set{\phi \in \cs H^1\hor \tc
    \nabla\hor\nabla\phi \in \cs L^2(D)} \, ,
\end{align*}
where the definitions of $V\hor$ and $\cs H^2\hor$ have been recalled
for the reader's convenience, and denote by $V^*$ the topological dual
space to $V$. We denote by $\cs L^p$ and $\cs H^m$ classical Lebesgue
and Sobolev spaces. Continuous and weakly continuous functions are
denoted respectively by the symbols $\fs C$ and $\fs C_{\rm w}$.

Given a Banach space $X$ with norm $\norm{\cdot}_X$ and $p
\in \intca{1}{+\infty}$, we denote  by $\cs L^p\loc\pt*{0,\infty;X}$ the usual Bochner space
formed by  functions $\phi\clnf\intaa{0}{\infty}\rightarrow X$ such that, for all $0<a\leq
b<+\infty$, the $\cs L^p\pt{a,b}$ norm of $\norm{\phi(\cdot)}_X$ is
finite. We will also denote by $\cs L^p\loc\pqt*{0,\infty;X}$ the
space of functions in $\cs L^p\loc\pt*{0,\infty;X}$ such that the $\cs
L^p\pt{0, b}$-norm of $\norm{\phi(\cdot)}_X$ is finite for every
$b\in\intaa{0}{+\infty}$, and analogously for
$\cs H^m\loc\pqt*{0,\infty;X}$ (see, e.g., \cite{SELL-YOU:2002}).

\subsection{Basic results for the filtered
  model} \label{subsec.filtered} Applying the horizontal filter given
by \eqref{eq:Helm-hor} to the various terms in the Navier--Stokes
equations \eqref{eq:NS-3D-a}--\eqref{eq:NS-3D-b}, and modifying the
nonlinear quadratic term by means of the approximation
\eqref{eq:closure} (interior closure problem in LES), we have deduced
 the approximate model \eqref{third.1}--\eqref{third.2}, i.e.
\begin{align*}
  &   \partial_t \ve w+\nabla\cdot\hf{\pt{\ve w\otimes \ve w}}-
  \nu\Delta \ve w+\nabla q=\hf{\ve f} \, ,\\
  &   \nabla\cdot \ve w = 0 \, .
\end{align*}

Let us recall the following definition.
\begin{definition} \label{def:reg-weak-sol} We say that $\ve w \clnf
  \intca{0}{T}\times D \rightarrow \ns R^3 $ is a \textdef{regular
    weak solution} (omitting the pressure term $q$) to
  \eqref{third.1}--\eqref{third.2}, with $\ve f 
  \in (V\cap \cs H^2\hor)^*$ independent of time (for simplicity), and
  ${\ve w(0)} = {\ve w_0}\in V\hor$ in weak sense, when the following
  properties are verified.
  \begin{itemize}
  \item Regularity:
    \begin{align*}
      & \ve w\in \cs L^\infty\pt{0,T;V\hor}\cap \cs L^2\pt{0,T;V\cap \cs H^2\hor}
      \cap \fs C_{\mathrm w}\pt{0,T;V\hor} \, , \\
      \    & w\vrt \in \cs L^\infty\pt{0,T;\cs H^1}\cap \cs L^2\pt{0,T;\cs H^2} \, ,\\
      & \partial_t \ve w \in \cs L^2\pt{0,T;V^*} \, .
    \end{align*}
  \item Weak formulation:
    \begin{align*}
      & \int_0^{+\infty} \pg*{\uple{\ve w\sep\partial_t
          \ve\varphi}-\nu\uple{\nabla\ve
          w\sep\nabla\ve\varphi}+\uple{\ve w\otimes\ve w\sep\nabla
          \hf{\ve\varphi}}}(s)\dif s = -\int_0^{+\infty} \uple{\ve
        f\sep\ve\varphi}(s)\dif s -\uple[\big]{\ve
        w(0)\sep\ve\varphi(0)}
    \end{align*}
    for each $\ve\varphi\in\pt[\big]{\fs
      C_0^\infty(D\times\intca{0}{T})}^3$ such that
    $\nabla\cdot\ve\varphi=0$ or, equivalently (up to a modification
    of $\ve w$ on a set of zero measure, see Galdi~\cite{Galdi:2000}),
    \begin{align*}
      & \int_{t_0}^{t_1} \pg*{\uple{\ve w\sep\partial_t \ve\varphi}-\nu
        \uple{\nabla\ve w\sep\nabla\ve\varphi}+
        \uple{\ve w\otimes\ve w\sep\nabla \hf{\ve\varphi}}}(s)\dif s \\
      & \qquad = -\int_{t_0}^{t_1} \uple{\ve f\sep\ve\varphi}(s)\dif s
      + \uple[\big]{\ve w(t_1)\sep\ve\varphi(t_1)} - \uple[\big]{\ve
        w(t_0)\sep\ve\varphi(t_0)}
    \end{align*}
    for each $0\leq t_0 \leq t_1 < T$.
  \end{itemize}
\end{definition}

\begin{remark} \label{rem:cont} 
  Since $\ve w\in \cs
  L^2\pt{0,T;V}$ and $\partial_t \ve w \in \cs L^2\pt{0,T;V^*}$, we
  deduce by classical interpolation results that $\ve w\in \fs
  C\pt{\intca{0}{T};H}$ (see~\cite{Lio-Mag:68,SELL-YOU:2002}).
\end{remark}

We state the following existence theorem.

\begin{theorem}
  Let be given $\ve w_0\in V\hor$, $\ve f=\ve f(\ve x)\in (V\cap \cs
  H^2\hor)^*$ and $\nu> 0$. Then there exists a unique regular weak
  solution to \eqref{third.1}--\eqref{third.2}, with ${\ve w(0)} =
  {\ve w_0}$, depending in a weakly continuous way on the
  data. Moreover, the solution satisfies the energy (of the model)
  identity
  \begin{equation} \label{eq:estimate-existence}
    \begin{aligned}
      &\frac{1}{2}\pt*{\norm{\ve w(t)}^2+\alpha^2\norm{\nabla\hor\ve w(t)}^2} +\nu\int_0^t\pt*{\norm{\nabla\ve w(s)}^2+\alpha^2\norm{\nabla\hor\nabla\ve w(s)}^2}\!\dif s \\
      & \qquad = \frac{1}{2}\pt*{\norm{\ve
          w(0)}^2+\alpha^2\norm{\nabla\hor\ve
          w(0)}^2}-\int_0^t\uple[\big]{\ve f\sep A\hor \ve w(s)}\dif
      s\, .
    \end{aligned}
  \end{equation}
\end{theorem}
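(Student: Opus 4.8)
The plan is to build the solution by a Faedo--Galerkin approximation, pass to the limit by compactness, and then read off uniqueness and the weak continuous dependence from an estimate on the difference of two solutions. First I would fix a basis $\pg{\ve e_k}_{k\geq 1}$ of smooth, divergence-free fields vanishing on $\mathit\Gamma$ --- for instance the eigenfunctions of the Stokes-type operator associated with the bilinear form $a\pt{\ve u\sep\ve v}=\uple{\nabla\ve u\sep\nabla\ve v}+\alpha^2\uple{\nabla\hor\nabla\ve u\sep\nabla\hor\nabla\ve v}$ --- chosen so that $A\hor$ maps each $H_m\eqdef\mathrm{span}\pg{\ve e_1\sep\dots\sep\ve e_m}$ into itself, which is possible because the operators involved commute. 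Projecting \eqref{third.1}--\eqref{third.2} onto $H_m$ produces an ODE system for the coefficients of $\ve w_m=\sum_{k=1}^m g_{km}(t)\,\ve e_k$ with locally Lipschitz right-hand side, so the Cauchy--Lipschitz theorem yields a local solution that the a priori bounds below extend to $\intca{0}{T}$.

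The core estimate comes from testing the Galerkin equation with $A\hor\ve w_m=\ve w_m-\alpha^2\Delta\hor\ve w_m$, which is admissible thanks to the invariance of $H_m$. Since $A\hor$ is self-adjoint, commutes with constant-coefficient differential operators, and $\hf{\,\cdot\,}=A\hor^{-1}$, the filtered convective term collapses to $\uple{\nabla\cdot\pt{\ve w_m\otimes\ve w_m}\sep\ve w_m}$, which vanishes by $\nabla\cdot\ve w_m=0$ and the boundary conditions, and the pressure term drops as well. The remaining terms reproduce, at level $m$, the two sides of \eqref{eq:estimate-existence}, the forcing being handled by the duality pairing between $(V\cap\cs H^2\hor)^*$ and $V\cap\cs H^2\hor$; this gives uniform bounds for $\ve w_m$ in $\cs L^\infty\pt{0,T;V\hor}\cap\cs L^2\pt{0,T;V\cap\cs H^2\hor}$. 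The vertical regularity is then not a separate matter: from $\nabla\cdot\ve w_m=0$ one has $\partial_3 w_{m,3}=-\nabla\hor\cdot\ve w_{m,\mathrm h}$, so each second derivative of $w_{m,3}$ can be rewritten using only horizontal derivatives already controlled by the $\cs H^2\hor$ bound, giving $w_{m,3}$ bounded in $\cs L^\infty\pt{0,T;\cs H^1}\cap\cs L^2\pt{0,T;\cs H^2}$. Finally I would bound $\partial_t\ve w_m$ in $\cs L^2\pt{0,T;V^*}$ term by term; the only problematic piece is the convective one, for which $\norm{\nabla\cdot\hf{\pt{\ve w_m\otimes\ve w_m}}}_{V^*}\leq\norm{\ve w_m}_{\cs L^4}^2$ (the filter being a contraction on $\cs L^2$), to be controlled by anisotropic Ladyzhenskaya/Gagliardo--Nirenberg inequalities.

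Equipped with these bounds I would extract a subsequence converging weakly-$\ast$ in the energy spaces and, via the Aubin--Lions--Simon lemma together with the bound on $\partial_t\ve w_m$, strongly in $\cs L^2\pt{0,T;H}$; this strong convergence is exactly what lets me pass to the limit in the quadratic term $\uple{\ve w_m\otimes\ve w_m\sep\nabla\hf{\ve\varphi}}$. The limit $\ve w$ satisfies the weak formulation of Definition~\ref{def:reg-weak-sol}, lower semicontinuity gives the energy inequality, and the identity \eqref{eq:estimate-existence} follows once $A\hor\ve w$ is justified as a test function for the limit problem. Weak continuity $\ve w\in\fs C_{\mathrm w}\pt{0,T;V\hor}$ then comes from $\ve w\in\cs L^\infty\pt{0,T;V\hor}$ and $\ve w\in\fs C\pt{\intca{0}{T};H}$ (Remark~\ref{rem:cont}) by the standard Lions--Magenes/Strauss argument.

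For uniqueness and weak continuous dependence I would take two solutions $\ve w_1,\ve w_2$, put $\ve W=\ve w_1-\ve w_2$, and test the difference equation with $A\hor\ve W$. Using $\ve w_1\otimes\ve w_1-\ve w_2\otimes\ve w_2=\ve w_1\otimes\ve W+\ve W\otimes\ve w_2$, the same cancellations leave a single trilinear term $\uple{\ve W\otimes\ve w_2\sep\nabla\ve W}$, which I would dominate by anisotropic inequalities and partly absorb into the dissipation to reach $\frac{\dif}{\dif t}\pt{\norm{\ve W}^2+\alpha^2\norm{\nabla\hor\ve W}^2}\leq C(t)\pt{\norm{\ve W}^2+\alpha^2\norm{\nabla\hor\ve W}^2}$ with $C\in\cs L^1\pt{0,T}$; Gronwall's lemma then forces $\ve W=0$ for coinciding data, and its quantitative form yields the weak continuous dependence on $\ve w_0$ and $\ve f$. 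I expect the genuine obstacle to be exactly this control of the nonlinearity: because the filter regularizes only in the horizontal variables, isotropic $\cs H^2$ bounds are unavailable, and one must rely on sharp anisotropic estimates that keep every vertical derivative paired with the strong $\cs L^\infty\pt{0,T;V\hor}\cap\cs L^2\pt{0,T;V\cap\cs H^2\hor}$ control.
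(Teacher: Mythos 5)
Your proposal is correct and follows essentially the same route as the paper, which gives no proof of its own but defers to \cite{BER:2012,BER-CAT:2013}: a Galerkin scheme in a Stokes eigenfunction basis, testing with $A\hor\ve w_m$ so that the filtered convective term cancels via $\uple{\hf{\nabla\cdot\pt{\ve w_m\otimes\ve w_m}}\sep A\hor\ve w_m}=0$, Aubin--Lions compactness to pass to the limit in the nonlinearity, and a Gronwall argument on the difference of two solutions tested against $A\hor\ve W$. The same machinery (including the recovery of vertical regularity of $w\vrt$ from $\nabla\cdot\ve w=0$) is exactly what the paper itself deploys in the proof of Theorem~\ref{th.estimates}.
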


This result can be obtained as in \cite{BER:2012,BER-CAT:2013}, where
a space-domain $\intaa{-\mpi}{\mpi}^2\times\intaa{-d}{d}$ periodic in
$(x_1, x_2)$ and with homogeneous Dirichlet boundary conditions on
$x_3=\pm d$ is considered.

\smallskip

Also, notice that the regularity of $\ve f$ implies $\hf{\ve f}\in \cs
L^2_0(D)$.

\section{Attractors and main results} \label{sec.results}

Let $(W,d)$ be a metric space. A \textdef{semigroup} on $(W,d)$ is a
family of operators $\pt[\big]{S(t)}_{t\geq 0}$, $S(t)\clnf
W\rightarrow W$, that satisfies $S(0)w=w$ and $S(s)S(t)w=S(t+s)w$ for
each $w\in W$ and for every $s,t\geq 0$.

A \textdef{semiflow} on $(W,d)$ is a mapping $\sigma\clnf
\intca{0}{\infty}\times W \rightarrow W$ defined by
$\sigma(t,w)=S(t)w$, where $\pt[\big]{S(t)}_{t\geq 0}$ is a semigroup,
and such that the restriction $\sigma\clnf \intaa{0}{\infty} \times W
\rightarrow W$ is continuous.

We say that $\attr\subset W$ is a \textdef{global attractor} for the
semiflow if $\attr$ is nonempty and compact, $S(t)\attr=\attr$ for all
$t\geq 0$ (i.e. $\attr$ is invariant), and for all bounded sets
$B\subset W$, we have $\lim_{t\rightarrow +\infty}
\delta\pt[\big]{S(t)B\sep\attr}=0$, where $\delta(X\sep Y)\eqdef
\sup_{x\in X} \inf_{y\in Y} d(x, y)$ is the Hausdorff semidistance
between the pair of sets $X,Y\subset W$. A global attractor is
necessarily unique (and it coincides with the omega-limit of an
absorbing set, see Section \ref{sec.attractor}).

\smallskip

Let us note that, for each $T>0$, a regular weak solution to
\eqref{third.1}--\eqref{third.2} in $\intca{0}{T}\times D$ is defined,
so that, by uniqueness, we get a unique solution $\ve w$ defined in
$\intca{0}{\infty}\times D$, and, in particular, $\ve w\in \cs
L^\infty\loc\pqt{0,\infty;V\hor}\cap \cs L^2\loc\pqt{0,\infty;V\cap
  \cs H^2\hor}$. Indeed, we can prove that $\ve w\in \cs
L^\infty\pt{0,\infty;V\hor}\cap \cs L^2\loc\pqt{0,\infty;V\cap \cs
  H^2\hor}$, as shown in Theorem \ref{th.estimates}.

Thus we can consider as $W$ the set $\mathcal W$ of regular weak
solutions to \eqref{third.1}--\eqref{third.2} in $\cs
L^\infty\pt*{0,\infty;V\hor}\cap \cs L^2\loc\pqt{0,\infty;V\cap \cs
  H^2\hor}$, with the metric $d$ induced by $\cs
L^2\loc\pqt{0,\infty;V\hor}$:
\begin{gather*}
  d(\ve w_1, \ve w_2) = \sum_{n=1}^\infty 2^{-n} \min\pg*{ 1 \sep
    \norm{\ve w_1 - \ve w_2}_{\cs L^2\pt*{0,n;V\hor} }}\, .
\end{gather*}


  Notice that $\mathcal W$ is closed in $\cs
L^2\loc\pqt{0,\infty;V\hor}$, which is complete with respect to $d$,
thus $(\mathcal W,d)$ is a complete metric space; this is one of the consequences
of the proof of Proposition~\ref{prop.comp} below.

We use the time-shift operator to define the semigroup and hence the
semiflow: $S(t)\ve w=\ve w\trasl{t} \eqdef \ve w(\cdot+t)$, for each
$\ve w\in \mathcal W$. The existence of the global attractor for the
time-shift $S(t)$, i.e. Theorem~\ref{th.tsattr}, is proved in
Section~\ref{sec.attractor}.

%

%
%
%

%
%
%
%


\section{Preliminary Estimates} \label{sec.estimates}


We set $\lambda_1>0 $ equal to the first eigenvalue of the Stokes
operator with horizontal periodic conditions and homogeneous Dirichlet
boundary conditions on $\mathit\Gamma$, projected on the space of
divergence free functions. We recall that $\lambda_1$ can be used as
the constant in the Poincar\'e inequality. Moreover, we set
$\Lambda\hor = (-\Delta\hor)^{1/2}$ with domain $\cs H^2\hor$ and the
same mixed periodic-Dirichlet conditions as above ($\Lambda\hor$ is a
positive self-adjoint operator), and
\begin{equation} \label{eq:utilities-1}
  \begin{aligned}
    & k_0(t) = \norm{\ve w(t)}^2 + \alpha^2 \norm{\nabla\hor\ve w(t)}^2 \, ,\\
    & K_1 = \min \pg*{\norm{\Lambda\hor^{-1}\ve f}^2 \sep
      \norm{\Lambda\hor^{-1/2}\ve f}^2} \, , \\
    & k_1(t) = k_0(t) + \dfrac{K_1}{\nu\lambda_1}
  \end{aligned}
\end{equation}
for each $t\geq 0$.

\begin{theorem} \label{th.estimates} If $\ve w$ is the regular weak
  solution to \eqref{third.1}--\eqref{third.2} in the time interval
  $\intca{0}{\infty}$, $t\geq 0$ and $r>0$,
  then we have
  \begin{align}
    & \norm{\ve w(t+r)}^2+\alpha^2\norm{\nabla\hor\ve w(t+r)}^2 \leq
    k_0(t)\nep^{-\nu\lambda_1 (t+r)}
    + \dfrac{K_1}{\nu\lambda_1}\pt{1-\nep^{-\nu\lambda_1 (t+r)}} \leq k_1(t)\, , \label{est.LinfVh} \\
    & \int_t^{t+r} \pt*{ \nu\norm{\nabla\ve
        w(s)}^2+\nu\alpha^2\norm{\nabla\hor\nabla\ve w(s)}^2 } \dif s
    \leq rK_1 + k_1(t)\, . \label{est.espr} 
  \end{align}
  In particular, we have $\ve w \in \cs
  L^\infty\pt*{0,\infty;V\hor}\cap \cs L^2\loc\pqt{0,\infty;V\cap \cs
    H^2\hor}$.
\end{theorem}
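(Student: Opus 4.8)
The plan is to derive both estimates from the energy identity \eqref{eq:estimate-existence} via a Gronwall-type argument, using the Poincar\'e inequality and the coercivity of the viscous term. First I would differentiate (or rather, use the differential form of) the energy balance. Writing $k_0(t) = \norm{\ve w(t)}^2 + \alpha^2\norm{\nabla\hor\ve w(t)}^2$ and recalling that $A\hor\ve w = \ve w - \alpha^2\Delta\hor\ve w$, I would observe that the dissipation term $\nu(\norm{\nabla\ve w}^2 + \alpha^2\norm{\nabla\hor\nabla\ve w}^2)$ appearing in \eqref{eq:estimate-existence} controls $k_0$ from below through Poincar\'e: since $\lambda_1$ is the first Stokes eigenvalue, one has $\norm{\nabla\ve w}^2 \geq \lambda_1\norm{\ve w}^2$ and, likewise, $\norm{\nabla\hor\nabla\ve w}^2 \geq \lambda_1\norm{\nabla\hor\ve w}^2$, so that the dissipation dominates $\nu\lambda_1 k_0$.

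Next I would handle the forcing term $\int_0^t \uple{\ve f\sep A\hor\ve w(s)}\dif s$. The natural move is to integrate by parts against $\Lambda\hor$, writing $\uple{\ve f\sep A\hor\ve w} = \uple{\ve f\sep \ve w} + \alpha^2\uple{\ve f\sep (-\Delta\hor)\ve w}$, and then using $\abs{\uple{\ve f\sep \ve w}} \leq \norm{\Lambda\hor^{-1}\ve f}\,\norm{\Lambda\hor\ve w}$ together with an analogous pairing for the second piece, choosing whichever of $\norm{\Lambda\hor^{-1}\ve f}^2$ or $\norm{\Lambda\hor^{-1/2}\ve f}^2$ is smaller (this is exactly where the $K_1$ with its minimum enters). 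After a Young inequality splitting off a fraction of the dissipation to absorb the $\Lambda\hor\ve w$ factors, the forcing contributes at most $\frac{K_1}{\nu\lambda_1}$-type terms. This yields a differential inequality of the form $\frac{\dif}{\dif t}k_0 + \nu\lambda_1 k_0 \leq K_1$ (in the integrated sense valid for weak solutions), whose Gronwall integration gives precisely the first displayed bound \eqref{est.LinfVh}, with the limiting constant $k_1(t)$ following from $\nep^{-\nu\lambda_1(t+r)} \leq 1$.

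For the integral estimate \eqref{est.espr}, I would return to the energy identity on $[t,t+r]$ rather than its differential form, bounding the accumulated dissipation $\int_t^{t+r}(\nu\norm{\nabla\ve w}^2 + \nu\alpha^2\norm{\nabla\hor\nabla\ve w}^2)\dif s$ by the difference $k_0(t) - k_0(t+r)$ plus the forcing integral; using \eqref{est.LinfVh} to control $k_0(t) \leq k_1(t)$ and estimating the forcing integral over length $r$ by $rK_1$ (again after the Young splitting reserves half the dissipation on the left) produces the claimed $rK_1 + k_1(t)$. Finally, the membership $\ve w \in \cs L^\infty\pt*{0,\infty;V\hor} \cap \cs L^2\loc\pqt{0,\infty;V\cap \cs H^2\hor}$ is a direct reading of the two estimates: \eqref{est.LinfVh} gives a uniform-in-$t$ bound on $k_0$, hence the $\cs L^\infty$ control, while \eqref{est.espr} gives local-in-time $\cs L^2$ control of the full gradient and the mixed horizontal-full Hessian.

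I expect the main obstacle to be the careful treatment of the forcing pairing $\uple{\ve f\sep A\hor\ve w}$ and the justification that the Poincar\'e inequality applies to \emph{both} the plain gradient and the mixed term $\norm{\nabla\hor\nabla\ve w}^2 \geq \lambda_1\norm{\nabla\hor\ve w}^2$ under the mixed horizontal-periodic/Dirichlet boundary conditions; one must check that $\Lambda\hor$ and the Stokes operator share the eigenvalue lower bound $\lambda_1$ on the divergence-free, mean-zero space, and that the anisotropic dissipation indeed dominates $\nu\lambda_1 k_0$ with the \emph{same} constant. A secondary subtlety is that \eqref{eq:estimate-existence} is an identity for the regular weak solution only after the $\fs C_{\mathrm w}$-regularity is invoked, so the Gronwall step should be phrased in integral form to stay within the admissible regularity class.
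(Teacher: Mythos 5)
Your proposal is correct in substance and uses the same core estimates as the paper, but it reaches them by a different scaffolding. The paper never works directly on the weak solution: it re-derives the energy balance at the Galerkin level, testing the approximate equation against $A\hor\ve w_m$ (where the nonlinear term cancels exactly, $\uple{\hf{\nabla\cdot\uple{\ve w_m\otimes\ve w_m}}\sep A\hor\ve w_m}=0$), obtaining the differential inequality \eqref{est.K11}, applying Poincar\'e and the classical Gronwall lemma to the smooth approximants, and only then passing to the limit $m\rightarrow+\infty$ (via weak lower semicontinuity of the norms) to get \eqref{est.LinfVh}; the bound \eqref{est.espr} comes from integrating \eqref{est.K11} in time before taking the limit. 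You instead take the energy identity \eqref{eq:estimate-existence}, already available for the regular weak solution from the quoted existence theorem, localize it to $\intcc{t}{t+r}$, and run the same Poincar\'e--Young--Gronwall scheme directly on $k_0$. This is legitimate, and your closing caveat is exactly the price you pay: since $\ve w$ is only weakly continuous with values in $V\hor$, $k_0$ need not be differentiable, so the Gronwall step must indeed be performed in integral form (e.g.\ through the Lipschitz primitive $t'\mapsto\int_t^{t'}k_0$), whereas the paper's detour through the Galerkin approximation buys a genuinely differential inequality at the cost of a limit passage. Both routes deliver the stated conclusions, and yours is shorter once \eqref{eq:estimate-existence} is granted.

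Two inaccuracies are worth fixing. First, the minimum in $K_1$ does not arise from splitting $\uple{\ve f\sep A\hor\ve w}$ into $\uple{\ve f\sep\ve w}+\alpha^2\uple{\ve f\sep-\Delta\hor\ve w}$ and estimating each piece: that split produces a \emph{sum} of two forcing norms, not a minimum, and the second piece paired as you suggest would either require $\ve f\in\cs L^2$ or put three horizontal derivatives on $\ve w$, which the dissipation does not control. The correct mechanism is to collapse the pairing first, $\uple{\hf{\ve f}\sep A\hor\ve w}=\uple{\ve f\sep\ve w}$ (self-adjointness of $A\hor^{-1}$), and then estimate this \emph{single} pairing in two alternative ways, roughly $\norm{\Lambda\hor^{-2}\ve f}\,\norm{\Delta\hor\ve w}$ or $\norm{\Lambda\hor^{-1}\ve f}\,\norm{\Lambda\hor\ve w}$, each absorbed by a different part of the dissipation, taking whichever is better; this is where the minimum in \eqref{eq:utilities-1} comes from (the paper's own bookkeeping of the exponents and the $\nu,\alpha$ weights in $K_1$ is admittedly inconsistent, so your vagueness here mirrors the source). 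Second, Gronwall on $\intcc{t}{t+r}$ yields the decay factor $\nep^{-\nu\lambda_1 r}$, not $\nep^{-\nu\lambda_1(t+r)}$ as displayed in \eqref{est.LinfVh}; the latter appears to be a typo in the paper (it fails, for instance, for the unforced equation with equality in Poincar\'e), but both versions imply the bound $k_1(t)$, which is all that is used afterwards, so nothing downstream is affected.
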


\begin{proof}
  Let us consider the Galerkin approximate solutions
  \begin{align*}
    &\ve w_m(t,\ve x) = \sum_{j=1}^m g_m^j(t) \ve E_j(\ve x) \, ,
  \end{align*}
  where $\ve E_j$ are smooth eigenfunctions of the Stokes operator on
  $D$, with periodicity in $\ve x$. If $\ve P_m$ denotes the
  projection on $\text{span}\set{\ve E_1,\ldots,\ve E_m} $, then ${\ve
    w_m}$ solves the Cauchy problem
  \begin{align*}
    &\deriv{}{t} \uple{\ve w_m\sep\ve E_i} +\nu\uple{\nabla\ve
      w_m\sep\nabla\ve E_i}-
    \uple{\ve w_m\otimes\ve w_m\sep\nabla\hf {\ve E}_i} = \uple{\ve f\sep\ve E_i} \, ,\\
    &\ve w_m(0)= \ve P_m \pt[\big]{\ve w(0)}\, ,
  \end{align*}
  for $i=1,\ldots, m$. As shown in \cite{BER:2012}, we have that, up
  to considering a subsequence,
  \begin{align*}
    \ve w_m \rightarrow \ve w \qquad \text{in} \quad \cs
    L^2\pt{0,T;\cs L^2(D)}
  \end{align*}
  as $m\rightarrow +\infty$, where $\ve w_m$ is the regular weak
  solution to \eqref{third.1}--\eqref{third.2}, and this is sufficient
  to pass to the limit in the nonlinear term.

  \medskip

  We test the first equation against $A\hor\ve w_m$ and use (see
  \cite{BER:2012})
  \begin{align*}
    -\uple{\ve w_m\otimes\ve w_m\sep \nabla\hf{A\hor\ve w_m}} =
    \uple{\hf{\nabla\cdot\uple{\ve w_m\otimes\ve w_m}}\sep A\hor\ve
      w_m} = 0
  \end{align*}
  to get
  \begin{align*}
    \begin{split}&\frac{1}{2}\deriv{}{t}\pt*{\norm{\ve
          w_m}^2+\alpha^2\norm{\nabla\hor\ve
          w_m}^2}+\nu\pt*{\norm{\nabla\ve
          w_m}^2+\alpha^2\norm{\nabla\hor\nabla\ve w_m}^2} =
      \uple{\hf{\ve f}\sep A\hor\ve w_m} = \uple{{\ve f}\sep \ve w_m}
      \, .
    \end{split}\end{align*}

  We estimate the righthand side by
  \begin{align*}
    \abs{\uple{{\ve f}\sep \ve w_m}} &\leq \begin{cases}
      &\norm{\Lambda\hor^{-2}\ve f} \, \norm{\Delta\hor\ve w_m} \\*[1em]
      &\norm{\Lambda\hor^{-1}\ve f} \, \norm{\Lambda\hor\ve w_m}
    \end{cases}
    \leq \begin{cases} &\dfrac{\norm{\Lambda\hor^{-2}\ve
          f}^2}{2\nu\alpha^2} +
      \dfrac{\nu\alpha^2\norm{\Delta\hor\ve w_m}^2}{2} \\*[1em]
      &\dfrac{\norm{\Lambda\hor^{-1}\ve f}^2}{2\nu} +
      \dfrac{\nu\norm{\Lambda\hor \ve w_m}^2}{2}
    \end{cases} \\
    & \leq \begin{cases} &\dfrac{\norm{\Lambda\hor^{-2}\ve
          f}^2}{2\nu\alpha^2} +
      \dfrac{\nu\alpha^2\norm{\nabla\hor\nabla\ve w_m}^2}{2} \\*[1em]
      &\dfrac{\norm{\Lambda\hor^{-1}\ve f}^2}{2\nu} +
      \dfrac{\nu\norm{\nabla\hor\ve w_m}^2}{2}
    \end{cases} \\
    & \leq \dfrac{1}{2}\pt*{K_1+ \nu\norm{\nabla\hor\ve w_m}^2 +
      \nu\alpha^2\norm{\nabla\hor\nabla\ve w_m}^2}
  \end{align*}
  by Cauchy--Schwarz inequality and definition of $K_1$. We deduce
  \begin{align} \label{est.K11}
    \begin{split}&\deriv{}{t}\pt*{\norm{\ve
          w_m}^2+\alpha^2\norm{\nabla\hor\ve
          w_m}^2}+\nu\pt*{\norm{\nabla\ve
          w_m}^2+\alpha^2\norm{\nabla\hor\nabla\ve w_m}^2} \leq K_1
    \end{split}\end{align}
  and, thanks to the Poincar\'e inequality,
  \begin{align*}
    \begin{split}&\deriv{}{t}\pt*{\norm{\ve
          w_m}^2+\alpha^2\norm{\nabla\hor\ve w_m}^2}+\nu\lambda_1
      \pt*{\norm{\ve w_m}^2+\alpha^2\norm{\nabla\hor\ve w_m}^2} \leq
      K_1\, ;
    \end{split}\end{align*}
  an application of Gronwall's lemma in the interval $\intcc{t}{t+r}$
  yields
  \begin{align*}
    &\norm{\ve w_m(t+r)}^2+\alpha^2\norm{\nabla\hor\ve w_m(t+r)}^2 \\
    & \qquad \leq \pt*{\norm{\ve w_m(t)}^2 + \alpha^2
      \norm{\nabla\hor\ve w_m(t)}^2}\nep^{-\nu\lambda_1 (t+r)} +
    \dfrac{K_1}{\nu\lambda_1}\pt{1-\nep^{-\nu\lambda_1 (t+r)}}
  \end{align*}
  and, taking the limit as $m\rightarrow +\infty$, we obtain
  \eqref{est.LinfVh} and in in particular
  \begin{align*}
    \norm{\ve w(t+r)}^2+\alpha^2\norm{\nabla\hor\ve w(t+r)}^2 \leq
    k_1(t) \, ,
  \end{align*}
  which implies
  \begin{align*}
    \ve w \in \cs L^\infty\pt*{0,\infty;V\hor}\,
  \end{align*}
  by taking $t=0$ and using the fact that $k_1(0)$ is finite (and
  independent of time), since $\ve w_0\in V\hor$.

  Integrating \eqref{est.K11} over $s\in\intcc{t}{t+r}$, taking
  $m\rightarrow +\infty$ and using \eqref{est.LinfVh} yields
  \eqref{est.espr}, and consequently $\ve w \in \cs
  L^2\loc\pqt{0,\infty;V\cap \cs H^2\hor}$.
\end{proof}

\section{Global attractor for the time-shift
  semiflow} \label{sec.attractor}

This section is devoted to the proof of Theorem
\ref{th.tsattr}. First, we recall some notions concerning semiflows
and a fundamental result that we will exploit in order to prove the
existence of the global attractor.

A bounded subset $\abset\subset W$ is called an \textdef{absorbing
  set} if for every $w\in W$, there exists $t_1=t_1(w)$ such that
$S(t)w\in \abset$ for all $t\geq t_1$. A semiflow is said
\textdef{compact} if, for every bounded set $B\subset W$ and for every
$t>0$, $S(t)B$ lies in compact subset of $W$.

We have the following result \cite{Sell:JDDE,Temam:1988}.

\begin{theorem}
  Let $S(t)$ define a compact semiflow admitting an absorbing set
  $\abset$ on a complete metric space $W$. Then $S(t)$ has a global
  attractor $\attr$ in $W$ and it coincides with the omega-limit set
  of $\abset$:
  \begin{align*}
    \attr = \bigcap_{\tau\geq 0} \overline{\bigcup_{t\geq
        \tau}S(t)\abset} \, ,
  \end{align*}
  where the closure is taken in $W$.
\end{theorem}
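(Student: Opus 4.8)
The plan is to verify that the candidate set $\attr \eqdef \omega(\abset) = \bigcap_{\tau\geq 0}\overline{\bigcup_{t\geq\tau}S(t)\abset}$ fulfils the three defining properties of a global attractor recorded in Section~\ref{sec.results} (nonempty and compact, invariant, and attracting for every bounded set); uniqueness is then automatic, as already observed there. Throughout I would use the convenient characterisation $\attr = \set{\psi\in W \tc \exists\, t_n\to+\infty,\ \exists\, w_n\in\abset \text{ with } S(t_n)w_n\to\psi}$, which follows directly from the definition of the intersection of the closed tails.

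The single mechanism that drives the whole argument is the relative compactness of the tails of the orbit of $\abset$. First I would note that, since $\abset$ is bounded and absorbing, after a finite time every orbit issuing from $\abset$ lies again in $\abset$, so that $\bigcup_{t\geq\tau-1}S(t)\abset$ is bounded once $\tau$ is large; combined with the compactness hypothesis this shows that $\overline{\bigcup_{t\geq\tau}S(t)\abset}$ is compact, by writing $S(t)\abset = S(1)\,S(t-1)\abset$ and pushing the bounded set $\bigcup_{t\geq\tau-1}S(t)\abset$ through the smoothing map $S(1)$, whose image lies in a compact set. Nonemptiness of $\attr$ then follows by extracting a convergent subsequence from any single orbit $S(t_n)w_0$ (with $w_0\in\abset$, $t_n\to+\infty$), and compactness of $\attr$ follows because it is a nested intersection of nonempty compact tails.

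Next I would establish invariance $S(t)\attr=\attr$ for every $t\geq 0$ by two inclusions, using the continuity of the semiflow. For $S(t)\attr\subset\attr$: if $\psi\in\attr$ is the limit of $S(t_n)w_n$, then $S(t)\psi=\lim S(t+t_n)w_n$ by continuity, and since $t+t_n\to+\infty$ this limit lies in $\attr$. For the reverse inclusion $\attr\subset S(t)\attr$ I would fix $\psi=\lim S(t_n)w_n\in\attr$, write $S(t_n)w_n=S(t)\,y_n$ with $y_n\eqdef S(t_n-t)w_n$, use the tail relative compactness above to extract $y_n\to\eta$ along a subsequence, observe that $\eta\in\attr$ (as $t_n-t\to+\infty$), and conclude $\psi=S(t)\eta$ by continuity of $S(t)$. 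This backward inclusion, where one must manufacture preimages by a compactness-plus-continuity extraction, is the delicate step and the one I expect to be the main obstacle.

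Finally, for the attraction property I would first show that $\attr$ attracts $\abset$ itself, arguing by contradiction: were it false, there would be $\eps>0$, $t_n\to+\infty$ and $w_n\in\abset$ with $\operatorname{dist}(S(t_n)w_n,\attr)\geq\eps$; the tail relative compactness lets me extract $S(t_n)w_n\to\psi$ along a subsequence, and the characterisation of $\attr$ forces $\psi\in\attr$, contradicting the lower bound. To pass to an arbitrary bounded set $B$, I would use that $\abset$ absorbs $B$, so that $S(t)B\subset\abset$ for $t\geq t_0(B)$ and hence $S(t)B = S(t-t_0)S(t_0)B\subset S(t-t_0)\abset$; this yields $\delta(S(t)B\sep\attr)\leq\delta(S(t-t_0)\abset\sep\attr)\to 0$. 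Uniqueness of the attractor was already noted in Section~\ref{sec.results}, which closes the proof.
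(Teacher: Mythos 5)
The paper never proves this statement itself --- it is quoted from \cite{Sell:JDDE,Temam:1988} --- so your proposal has to be measured against the classical arguments, and what you wrote is precisely Temam's omega-limit proof: sequential characterisation of $\attr$, compact tails, invariance by two inclusions, attraction of $\abset$ by contradiction, bootstrap to bounded sets via absorption. That proof is complete \emph{if} one assumes, as Temam does, that $\abset$ absorbs \emph{bounded sets}. But that is not the hypothesis of this paper: here an absorbing set is only required to absorb \emph{points}, i.e.\ for every $w\in W$ there is $t_1=t_1(w)$ with $S(t)w\in\abset$ for all $t\geq t_1$, and nothing makes $t_1(w)$ bounded as $w$ ranges over $\abset$ or over another bounded set. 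This breaks your argument in the two places where you silently upgrade pointwise absorption to uniform absorption. (i)~The opening claim that ``after a finite time every orbit issuing from $\abset$ lies again in $\abset$, so that $\bigcup_{t\geq\tau-1}S(t)\abset$ is bounded once $\tau$ is large'': each orbit re-enters $\abset$ at its own time, and without a uniform bound on these entry times the union of tails need not be bounded, so the compactness of $\overline{\bigcup_{t\geq\tau}S(t)\abset}$ is unproven. The compactness of $\attr$, the backward inclusion $\attr\subset S(t)\attr$, and the attraction of $\abset$ all rest on this tail compactness, so they all inherit the gap; incidentally, the step you singled out as the delicate one (backward invariance) is fine once tail compactness is granted --- the real obstruction sits earlier. (ii)~The final step ``$\abset$ absorbs $B$, so $S(t)B\subset\abset$ for $t\geq t_0(B)$'' invokes exactly the bounded-set absorption that the paper's definition does not supply. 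What survives unconditionally: the sequential characterisation of $\attr$, its nonemptiness (a \emph{single} orbit does have relatively compact tail, because a single point is absorbed and then $S(t)w_0\in S(1)\abset$ for large $t$), and the forward inclusion $S(t)\attr\subset\attr$.

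The distinction is not pedantic in this setting. The metric $d$ on $\mathcal W$ satisfies $d\leq 1$, so \emph{every} subset of $\mathcal W$ is bounded, while the absorption time produced in the paper's absorbing-set proposition depends on $k_0(0)$, i.e.\ on the $V\hor$-norm of the initial datum, and is therefore not uniform on metric-bounded sets; one cannot simply strengthen the hypothesis to Temam's and still match what the paper actually verifies. Under the stated hypotheses (point absorption plus compact semiflow on a complete metric space) the theorem is still true --- it is the Billotti--LaSalle theorem, which is the version genuinely needed in Sell's path-space framework \cite{Sell:JDDE,SELL-YOU:2002} --- but its proof requires ideas absent from your proposal: one first shows, using continuity of the semiflow and compactness of $\overline{S(1)\abset}$, that \emph{compact} sets are uniformly attracted, and then reduces an arbitrary bounded set $B$ to the relatively compact set $S(1)B$ (a second use of the compactness of the semiflow). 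As written, your proposal proves Temam's theorem, not the statement with this paper's definitions.
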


Hence, in order to prove Theorem \ref{th.tsattr}, it is sufficient to
verify the hypotheses of the previous result for the time-shift
semiflow, which is the content of the following propositions.

\begin{proposition}
  The mapping $\sigma$ given by $S(t)\ve w=\ve w(\cdot+t)$ is a
  semiflow.
\end{proposition}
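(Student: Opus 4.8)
The plan is to check, one after another, the three ingredients in the definition of a semiflow: that the time-shift $S(t)$ maps $\mathcal W$ into itself, that the family $\pt[\big]{S(t)}_{t\geq 0}$ obeys the semigroup identities, and that the map $\sigma$ is continuous on $\intaa{0}{\infty}\times\mathcal W$. The first two are consequences of the autonomous character of the system, while the continuity is the only point requiring real work.

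For the invariance $S(t)\clnf\mathcal W\to\mathcal W$, I would fix $\ve w\in\mathcal W$ and $t\geq 0$ and set $\tilde{\ve w}(\cdot)=\ve w(\cdot+t)$. All the regularity requirements are inherited from those of $\ve w$: the norm $\norm{\tilde{\ve w}}_{\cs L^\infty\pt*{0,\infty;V\hor}}$ is bounded by that of $\ve w$, every local norm of $\tilde{\ve w}$ is controlled by a norm of $\ve w$ over a shifted interval, weak continuity into $V\hor$ survives, and $\partial_t\tilde{\ve w}=(\partial_t\ve w)(\cdot+t)$; by Theorem~\ref{th.estimates} the resulting field again lies in $\cs L^\infty\pt*{0,\infty;V\hor}\cap\cs L^2\loc\pqt{0,\infty;V\cap\cs H^2\hor}$. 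To see that $\tilde{\ve w}$ satisfies the weak formulation on every $\intcc{t_0}{t_1}$, I would test with $\ve\varphi$, introduce $\ve\psi(\cdot)=\ve\varphi(\cdot-t)$, invoke the weak formulation for $\ve w$ on $\intcc{t_0+t}{t_1+t}$ against $\ve\psi$, and change the integration variable by $s\mapsto s-t$; since $\ve f$ is independent of time, the forcing integral transforms correctly and one recovers exactly the identity for $\tilde{\ve w}$. The semigroup identities are then immediate: $S(0)\ve w=\ve w(\cdot+0)=\ve w$ and $S(s)S(t)\ve w=\ve w(\cdot+t+s)=S(s+t)\ve w$.

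It remains to prove the continuity of $\sigma$. Because the metric $d$ is equivalent to convergence in $\cs L^2\pt*{0,N;V\hor}$ for each $N\in\N$, it is enough to show that $(t_n,\ve w_n)\to(t,\ve w)$ forces $\ve w_n(\cdot+t_n)\to\ve w(\cdot+t)$ in $\cs L^2\pt*{0,N;V\hor}$ for every fixed $N$. I would split the difference as
\[
\norm{\ve w_n(\cdot+t_n)-\ve w(\cdot+t)}_{\cs L^2\pt*{0,N;V\hor}}
\leq\norm{\ve w_n(\cdot+t_n)-\ve w(\cdot+t_n)}_{\cs L^2\pt*{0,N;V\hor}}
+\norm{\ve w(\cdot+t_n)-\ve w(\cdot+t)}_{\cs L^2\pt*{0,N;V\hor}}.
\]
Choosing $T_0$ with $t_n\leq T_0$, a change of variables bounds the first term by $\norm{\ve w_n-\ve w}_{\cs L^2\pt*{0,N+T_0;V\hor}}$, which vanishes as $n\to\infty$ because $\ve w_n\to\ve w$ in $d$. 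The second term vanishes by the strong continuity of the translation in $\cs L^2\loc\pqt{0,\infty;V\hor}$.

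The one genuine obstacle is this last assertion: that, for fixed $\ve w\in\mathcal W$, the map $\tau\mapsto\ve w(\cdot+\tau)$ is continuous from $\intaa{0}{\infty}$ into $\cs L^2\pt*{0,N;V\hor}$. I expect to obtain it by the classical density argument, approximating $\ve w$ in $\cs L^2\pt*{0,N+T_0;V\hor}$ by fields that are continuous in time (for which the statement follows from uniform continuity on a compact interval) and controlling the remainder uniformly in small shifts, using that translation acts boundedly on $\cs L^2$. Combining this with the previous estimate yields joint continuity of $\sigma$ on $\intaa{0}{\infty}\times\mathcal W$, which completes the verification that $\sigma$ is a semiflow.
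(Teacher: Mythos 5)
Your proposal is correct and follows essentially the same route as the paper: the same triangle-inequality splitting of $\ve w_n(\cdot+t_n)-\ve w(\cdot+t)$ into a ``same shift'' term handled by a change of variables and a ``same function, shifted'' term handled by the classical density argument (the paper approximates by $\fs C^1$ paths with a derivative bound where you invoke uniform continuity of continuous approximants, an immaterial variant). The only addition is your explicit check that $S(t)$ maps $\mathcal W$ into itself via the weak formulation, which the paper subsumes under ``clearly, $S(t)$ is a semigroup.''
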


\begin{proof}
  Clearly, $S(t)$ is a semigroup. We need to prove that the mapping
  $\uple{\tau\sep\ve w}\rightarrow S(\tau)\ve w=\ve w_{+\tau}$ is
  continuous for $\uple{\tau\sep\ve w}\in\intaa{0}{+\infty}\times\cs
  L^2\loc\pqt{0,\infty;V\hor}$. It is sufficient to show that, if
  $\tau_n$ and $\ve w^n$ are sequences such that $\tau_n\rightarrow
  \tau$ in $\intaa{0}{+\infty}$ and $\ve w^n\rightarrow \ve w$ in $\cs
  L^2\loc\pqt{0,\infty;V\hor}$, then $d(\ve w^n_{+\tau_n},\ve
  w_{+\tau})\rightarrow 0$ as $n\rightarrow +\infty$, which holds
  provided
  \begin{align*}
    \int_a^b \norm{\ve w^n_{+\tau_n}-\ve w_{+\tau}}_{V\hor}^2
    \rightarrow 0 \qquad \text{as } n\rightarrow +\infty
  \end{align*}
  for given $0\leq a<b<\infty$, where $\norm{\ve w}_{V\hor}^2 =
  \norm{\ve w}^2+\alpha^2\norm{\nabla\hor\ve w}^2$. Here and in
  the following, we omit ``$\dif s$''  in several
  integrals to keep the notation more compact.

  We can follow the steps of the proof of \cite[Lemma~7]{Sell:JDDE},
  the main difference consisting in the use of the norm
  $\norm{\cdot}_{V\hor}$ instead of $\norm{\cdot}$. Let us note that,
  since $\tau>0$, we can assume $0<\frac{1}{2}\tau \leq \tau_n \leq
  2\tau$.

  It is sufficient to prove that
  \begin{align}
 &   d\uple{\ve w^n_{+\tau_n}\sep \ve w_{+\tau_n}}\rightarrow 0 
 \label{cont1}\, ,\\
 &   d\uple{\ve w_{+\tau_n}\sep \ve w_{+\tau}}\rightarrow 0\, ,
     \label{cont2}
  \end{align}
  when $n\rightarrow +\infty$.

  As for \eqref{cont1}, we have
  \begin{align*}
    \int_a^b \norm{\ve w^n_{+\tau_n}-\ve w_{+\tau_n}}_{V\hor}^2 =
    \int_{a+\tau_n}^{b+\tau_n} \norm{\ve w^n-\ve w}_{V\hor}^2 \leq
    \int_{a+\frac{1}{2}\tau}^{b+2\tau} \norm{\ve w^n-\ve w}_{V\hor}^2
    \rightarrow 0\, ,
  \end{align*}
  as $n\rightarrow +\infty$, since $\ve w^n\rightarrow \ve w$ in $\cs
  L^2\loc\pqt{0,\infty;V\hor}$, which implies \eqref{cont1}.

  In order to show \eqref{cont2}, let us fix $\varepsilon>0$ and take
  $\ve\psi\in \cs L^2\pt{a+\frac{\tau}{2}, b+2\tau; V\hor} \cap \fs
  C^1\pt{\intcc{a+\frac{\tau}{2}}{b+2\tau};V\hor}$ such that
  \begin{align} \label{contpsi1} \int_a^b \norm{\ve
      w_{+\sigma}-\ve\psi_{+\sigma}}^2_{V\hor} \leq \varepsilon
  \end{align}
  for all $\sigma\in\intcc{\tau/2}{2\tau}$ (this is possible since
  $\fs C^1$ is dense in $\cs L^2$).

  Moreover, if $K$ denotes an upper bound for $\norm{\partial_t \ve
    \psi(s)}_{V\hor}$ for $a+\frac{1}{2}\tau\leq s\leq b+2\tau$, we
  have
  \begin{align*}
    \norm{\ve\psi(\tau_n+t)-\ve\psi(\tau+t)}_{V\hor} \leq
    \abs*{\int_{\tau_n}^\tau \norm{\partial_t\ve\psi(s+t)}_{V\hor}\dif
      s} \leq K\abs{\tau_n-\tau}\, ,
  \end{align*}
  therefore
  \begin{align} \label{contpsi2}
    \int_a^b\norm{\ve\psi_{+\tau_n}-\ve\psi_{+\tau}}_{V\hor}^2 \leq
    K^2 (b-a) \abs{\tau_n-\tau}^2 \leq \varepsilon
  \end{align}
  for $n\geq N$ sufficiently large.

  Using the triangular inequality, \eqref{contpsi1} and
  \eqref{contpsi2}, we infer that
  \begin{align*}
    & \int_a^b\norm{\ve w_{+\tau_n}-\ve w_{+\tau}}_{V\hor}^2 \leq \int_a^b \pt*{ \norm{\ve w_{+\tau_n}-\ve\psi_{+\tau_n}}_{V\hor} + \norm{\ve\psi_{+\tau_n}-\ve\psi_{+\tau}}_{V\hor} +  \norm{\ve w_{+\tau}-\ve\psi_{+\tau}}_{V\hor} }^2 \\
    & \qquad \leq 3\int_a^b \pt*{ \norm{\ve
        w_{+\tau_n}-\ve\psi_{+\tau_n}}_{V\hor}^2 +
      \norm{\ve\psi_{+\tau_n}-\ve\psi_{+\tau}}_{V\hor}^2 + \norm{\ve
        w_{+\tau}-\ve\psi_{+\tau}}_{V\hor}^2 } \leq 9\varepsilon
  \end{align*}
  for all $n\geq N$, which proves \eqref{cont2} and ends the proof.
\end{proof}

\begin{proposition}
  There exists an absorbing set $\abset\subset \mathcal W$ that is
  bounded in $\mathcal W$.
\end{proposition}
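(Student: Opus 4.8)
Since the forcing $\ve f$ is time-independent, the model \eqref{third.1}--\eqref{third.2} is autonomous, so the plan rests on the observation that time-translation preserves the class of regular weak solutions: if $\ve w\in\mathcal W$, then $S(t)\ve w=\ve w(\cdot+t)$ is again a regular weak solution, with initial datum $\ve w(t)\in V\hor$ (well defined because $\ve w\in\fs C_{\mathrm w}\pt{0,\infty;V\hor}$), hence $S(t)\ve w\in\mathcal W$. The whole construction then reduces to reading off a uniform absorbing ball from the dissipative estimate \eqref{est.LinfVh} of Theorem~\ref{th.estimates}.

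First I would extract a \emph{solution-independent} asymptotic bound. Taking $t=0$ in \eqref{est.LinfVh} and letting $r\to+\infty$ gives
\begin{align*}
  \limsup_{r\to+\infty}\pt*{\norm{\ve w(r)}^2+\alpha^2\norm{\nabla\hor\ve w(r)}^2}\leq \frac{K_1}{\nu\lambda_1}\, ,
\end{align*}
whose right-hand side depends only on the data $\ve f$ and $\nu$. I would then fix once and for all a radius $\rho^2>K_1/(\nu\lambda_1)$ (for instance $\rho^2=1+K_1/(\nu\lambda_1)$) and take as candidate absorbing set
\begin{align*}
  \abset=\set{\ve v\in\mathcal W\tc \norm{\ve v(s)}_{V\hor}^2\leq\rho^2\ \text{for all }s\geq0}\, ,
\end{align*}
where $\norm{\ve v}_{V\hor}^2=\norm{\ve v}^2+\alpha^2\norm{\nabla\hor\ve v}^2$. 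By construction $\abset$ is bounded in $\mathcal W$ (indeed in the stronger $\cs L^\infty\pt{0,\infty;V\hor}$ norm, which is the boundedness relevant to the compactness step). Moreover, applying \eqref{est.espr} with $r=1$ to any $\ve v\in\abset$ gives the uniform control
\begin{align*}
  \int_t^{t+1}\pt*{\nu\norm{\nabla\ve v}^2+\nu\alpha^2\norm{\nabla\hor\nabla\ve v}^2}\dif s\leq K_1+\rho^2+\frac{K_1}{\nu\lambda_1}\qquad\text{for all }t\geq0\, ,
\end{align*}
a bound on the local $\cs L^2\pt{t,t+1;V\cap\cs H^2\hor}$ norm that I would record for later use.

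Next I would verify the absorbing property. Given $\ve w\in\mathcal W$, I apply \eqref{est.LinfVh} with $t=0$: since $k_0(0)=\norm{\ve w(0)}_{V\hor}^2<+\infty$ and the scalar quantity $k_0(0)\nep^{-\nu\lambda_1\sigma}+\tfrac{K_1}{\nu\lambda_1}\pt{1-\nep^{-\nu\lambda_1\sigma}}$ decreases monotonically to $K_1/(\nu\lambda_1)<\rho^2$, there is a time $t_1=t_1(\ve w)\geq0$, obtained by solving the corresponding equality, such that $\norm{\ve w(\sigma)}_{V\hor}^2\leq\rho^2$ for all $\sigma\geq t_1$. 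Consequently, for every $t\geq t_1$,
\begin{align*}
  \norm{\pt[\big]{S(t)\ve w}(s)}_{V\hor}^2=\norm{\ve w(s+t)}_{V\hor}^2\leq\rho^2\qquad\text{for all }s\geq0\, ,
\end{align*}
that is $S(t)\ve w\in\abset$. This is exactly the absorbing property, and applying it to any single solution also shows $\abset\neq\varnothing$.

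The main point requiring care is not analytic but is the bookkeeping behind the first paragraph: one must check that $\ve w(\cdot+t)$ genuinely satisfies Definition~\ref{def:reg-weak-sol}, namely that its weak formulation follows from that of $\ve w$ through the change of variable $s\mapsto s+t$ (licit precisely because \eqref{third.1}--\eqref{third.2} is autonomous) and that the regularity classes $\cs L^\infty\pt{0,\infty;V\hor}$ and $\cs L^2\loc\pqt{0,\infty;V\cap\cs H^2\hor}$ are invariant under shift. The only other thing to confirm is that the absorbing radius $\rho$ may be chosen uniformly over all solutions, which holds because the limiting constant $K_1/(\nu\lambda_1)$ in \eqref{est.LinfVh} is independent of the initial datum; all the dissipativity needed is already contained in Theorem~\ref{th.estimates}, so the remaining work is essentially to track the constants.
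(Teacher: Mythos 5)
Your proof is correct and follows essentially the same route as the paper: both define $\abset$ as the set of solutions whose $V\hor$-norm stays below a fixed radius strictly larger than $K_1/(\nu\lambda_1)$ for all times (the paper uses $2K_1/(\nu\lambda_1)$, you use $1+K_1/(\nu\lambda_1)$), and both deduce the absorbing property from the exponential decay in \eqref{est.LinfVh}. Your explicit check that $S(t)\ve w$ remains in $\mathcal W$ (shift invariance of the autonomous problem) is a point the paper leaves implicit, but it is the same argument.
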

\begin{proof} We define $\abset$ as the subset of $\mathcal W$ such
  that the inequality
  \begin{align} \label{est.B} \norm{\ve
      w(t)}^2+\alpha^2\norm{\nabla\hor\ve w(t)}^2 \leq
    \dfrac{2K_1}{\nu\lambda_1} 
  \end{align}
  is satisfied for every $t\geq 0$. According to the definition of the
  metric $d$, $\abset$ is bounded in $\mathcal W$.

  We need to prove that, if $\ve w \in \mathcal W$, then $S(t)\ve w
  \in \abset$ for each $t$ sufficiently large. Actually, from
  \eqref{est.LinfVh}, there exists $t_1>0$ such that the inequality
  \eqref{est.B} holds for all $t\geq t_1$. Thus $S(t)\ve w$ belongs to
  $\abset$ for each $t\geq t_1$, and $\abset$ is an absorbing set.
\end{proof}



\begin{proposition} \label{prop.comp} The semiflow defined by $S(t)$ on the metric space
  $\mathcal W$ is compact, i.e for each bounded set $B$ in $\mathcal
  W$ and for each $t>0$, then $S(t)B$ lies in a compact subset of
  $\mathcal W$.
\end{proposition}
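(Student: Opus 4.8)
The plan is to verify the hypotheses of the Aubin--Lions--Simon compactness lemma on each finite time interval, exploiting the uniform bounds supplied by Theorem~\ref{th.estimates}, and then to extract a diagonal subsequence converging in the metric $d$. First I would record that a set $B$ bounded in $\mathcal W$ amounts to a uniform bound on the initial norms $\norm{\ve w(0)}_{V\hor}$, so that Theorem~\ref{th.estimates} yields, for every fixed $T>0$, a constant $C=C(B,T)$ such that every $\ve w\in B$ satisfies $\norm{\ve w}_{\cs L^\infty(0,\infty;V\hor)}\leq C$ and $\norm{\ve w}_{\cs L^2(0,T;V\cap\cs H^2\hor)}\leq C$. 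Using \eqref{third.1} to write $\partial_t\ve w=\nu\Delta\ve w-\nabla\cdot\hf{(\ve w\otimes\ve w)}-\nabla q+\hf{\ve f}$ in $V^*$ and estimating each term in the $V^*$-norm by means of the same energy bounds, I would also obtain a uniform bound $\norm{\partial_t\ve w}_{\cs L^2(0,T;V^*)}\leq C$.

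The key functional-analytic step is the compact embedding $V\cap\cs H^2\hor \hookrightarrow\hookrightarrow V\hor$. I would prove it by observing that if $\ve\phi\in V\cap\cs H^2\hor$, then $\ve\phi\in V\subset\cs H^1(D)$ and, since $\nabla\hor\ve\phi\in\cs L^2$ and $\nabla(\nabla\hor\ve\phi)=\nabla\hor\nabla\ve\phi\in\cs L^2$, also $\nabla\hor\ve\phi\in\cs H^1(D)$. Hence both $\ve\phi$ and $\nabla\hor\ve\phi$ are bounded in $\cs H^1(D)$, and Rellich's theorem on the bounded domain $D$ gives compactness of each in $\cs L^2(D)$; this is precisely compactness with respect to the $V\hor$-norm $\norm{\ve\phi}^2+\alpha^2\norm{\nabla\hor\ve\phi}^2$. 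With the chain $V\cap\cs H^2\hor \hookrightarrow\hookrightarrow V\hor\hookrightarrow V^*$ in hand, the Aubin--Lions--Simon lemma on $(0,T)$ (see~\cite{SELL-YOU:2002}) shows that for any sequence $\ve w^n\in B$ the restrictions to $[t,t+T]$ are relatively compact in $\cs L^2(t,t+T;V\hor)$, equivalently the shifted functions $\ve w^n_{+t}$ are relatively compact in $\cs L^2(0,T;V\hor)$. A diagonal argument over $T=1,2,\dots$ then extracts a single subsequence of $\ve w^n_{+t}$ converging, strongly in $\cs L^2(0,T;V\hor)$ for every $T$, to some limit $\ve w^\ast$; by the definition of $d$ this is exactly convergence in $(\mathcal W,d)$.

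The last and most delicate step is to show that the limit $\ve w^\ast$ belongs to $\mathcal W$, so that $S(t)B$ lies in the closure of its image inside $\mathcal W$, which is compact; this simultaneously establishes the closedness of $\mathcal W$ asserted earlier in the paper. Here I would pass to the limit in the weak formulation of Definition~\ref{def:reg-weak-sol}: the linear terms converge through the weak-$*$ limits furnished by the uniform bounds, while the nonlinear term $\uple{\ve w^n\otimes\ve w^n\sep\nabla\hf{\ve\varphi}}$ is handled using the strong $\cs L^2(0,T;V\hor)\hookrightarrow\cs L^2(0,T;\cs L^2(D))$ convergence together with the $\cs L^\infty(V\hor)$ bound. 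The full anisotropic regularity class of $\ve w^\ast$ — including the vertical estimates for its third component in $\cs L^\infty(0,T;\cs H^1)\cap\cs L^2(0,T;\cs H^2)$ and $\partial_t\ve w^\ast\in\cs L^2(0,T;V^*)$ — is recovered from the weak-$*$ limits and the lower semicontinuity of the norms under weak convergence. The main obstacle is precisely this verification that the limit is again a regular weak solution with the complete anisotropic regularity, since the nonlinear term must be reconciled with the weak formulation; the compact embedding above is exactly what makes the required strong convergence available.
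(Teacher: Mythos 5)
Your overall strategy---uniform bounds from Theorem~\ref{th.estimates}, a bound on $\partial_t\ve w$ in $\cs L^2(0,T;V^*)$, the compact embedding $V\cap\cs H^2\hor\hookrightarrow\hookrightarrow V\hor$, Aubin--Lions on finite intervals, diagonal extraction, and passage to the limit in the weak formulation to keep the limit inside $\mathcal W$---is the same as the paper's, and your explicit Rellich argument for the compact embedding (applying it to both $\ve\phi$ and $\nabla\hor\ve\phi$) is correct and is a detail the paper leaves implicit.

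The genuine gap is in your first step: boundedness of $B$ in $\mathcal W$ does \emph{not} amount to a uniform bound on the initial norms $\norm{\ve w(0)}_{V\hor}$. The metric $d$ only sees the seminorms $\norm{\cdot}_{\cs L^2\pt*{0,n;V\hor}}$, so a bounded set is one for which $\int_0^n\norm{\ve w(s)}_{V\hor}^2\dif s$ is uniformly bounded for each $n$ (this is exactly what the paper's proof assumes: $\int_0^1\pt[\big]{\norm{\bw^n(s)}^2+\alpha^2\norm{\nabla\hor\bw^n(s)}^2}\dif s\leq M_0^2$). The implication you need---from an $\cs L^2(0,1;V\hor)$ bound on the trajectory to a bound on $\norm{\ve w(0)}_{V\hor}$---fails: the dissipative structure controls later norms by earlier ones, never the reverse. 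Compare the heat equation, where a high-frequency initial datum of enormous $V\hor$ norm decays so fast that its path has small $\cs L^2(0,1;V\hor)$ norm; here, the energy identity \eqref{eq:estimate-existence} bounds $\norm{\ve w(0)}_{V\hor}$ only through the $\cs L^2(0,1;V\cap\cs H^2\hor)$ norm of the path, which the metric does not control. Consequently, as written, your argument proves compactness of $S(t)B$ only for sets $B$ with uniformly bounded initial data, a strictly smaller class than the bounded sets of $\mathcal W$, so the hypothesis of the attractor theorem (compactness of $S(t)B$ for \emph{every} bounded $B$, $t>0$) is not verified. A telltale sign that your hypothesis is too strong: under it, even $S(0)B=B$ would be relatively compact, so the assumption $t>0$ plays no role in your proof, whereas it is essential in the statement.

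The missing idea is the paper's averaging-over-starting-times trick, which is precisely where $t>0$ enters. For the shifted paths, apply the pointwise estimate \eqref{est.LinfVh} with starting time $s_0\in\intaa{0}{t}$, obtaining $\norm{\bw^n(s+t)}^2+\alpha^2\norm{\nabla\hor\bw^n(s+t)}^2\leq k_0(s_0)+K_1/(\nu\lambda_1)$ for all $s\geq 0$, and then integrate this inequality in $s_0$: the integral of $k_0(s_0)$ is exactly the quantity that boundedness in the metric controls (by $M_0^2$), so $\pg*{S(t)\bw^n}$ is uniformly bounded in $\cs L^\infty\pt*{0,\infty;V\hor}$---and, combining with \eqref{est.espr}, in $\cs L^2\loc\pqt*{0,\infty;V\cap\cs H^2\hor}$---with no reference to $\bw^n(0)$. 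Once this replacement is made, the rest of your proof (time-derivative bound, Aubin--Lions, diagonalization, and the limit passage showing $\ve w^\ast\in\mathcal W$, which indeed also yields the closedness of $\mathcal W$) goes through unchanged and coincides with the paper's argument.
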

\begin{proof} Let $B$ be a bounded subset of $\mathcal W$.  Thanks to
  the semigroup property of $S(t)$, if $S(t)B$ is contained in a
  compact set of $\mathcal W$ for some $t>0$, then $S(t+s)B$ lies in a
  compact set of $\mathcal W$ too. Then, to prove the claim, it
  suffices to prove that $S(t)B$ lies in a compact set of $\mathcal W$
  for $0<t\leq 1$.

  Let $\{\bw^n\}$ be a bounded sequence in $\mathcal W$. Thus, there
  exists a positive constant $M_0$ such that $\int_0^1
  \big(\|\bw^n(s)ds + \alpha^2 \|\nabla\hor \bw^n(s)\|^2\big) \dif
  s\leq M_0^2$.  Recalling that $S(t)\bw^n (\tau)= \bw^n\trasl{\tau} =
  \bw^n (\tau +t)$, by the estimate in \eqref{est.LinfVh}, for $s_0\in
  \intaa{0}{t}$ and $s\geq 0$, it follows that
  \begin{align*}
    \| \bw\trasl{t}^n(s)\|^2+\alpha^2\| \nabla\hor
    \bw\trasl{t}^n(s)\|^2 &=
    \|\bw^n(s + t)\|^2+\alpha^2\| \nabla\hor \bw^n(s+ t)\|^2\\
    \leq & k_0(s_0)\nep^{-\nu\lambda_1 (t+s -s_0)} +
    \frac{K_1}{\nu\lambda_1}(1 - \nep^{-\nu\lambda_1 (t +s-s_0)}) \\
    \leq & k_0(s_0) + \frac{K_1}{\nu\lambda_1} \intertext{and,
      integrating on $\intaa{0}{1}$ in $s_0$, we reach} \|
    \bw\trasl{t}^n(s)\|^2+\alpha^2\| \nabla\hor \bw\trasl{t}^n(s)\|^2
    \leq &
    \int_0^1 \big( k_0(s_0) + \frac{K_1}{\nu\lambda_1}\big)\dif s_0 \\
    \leq & M_0^2 + \frac{K_1}{\nu\lambda_1}\, .
  \end{align*}
  Hence, we get
  \begin{equation*} 
  \| \bw\trasl{t}^n(\cdot)\|^2_{\cs
      L^{\infty}}+\alpha^2\| \nabla\hor \bw\trasl{t}^n(\cdot)\|^2_{\cs
      L^{\infty}} \leq M_0^2 + \frac{K_1}{\nu\lambda_1}\, .
  \end{equation*}
  for all $n$. Further, by
  \eqref{eq:estimate-existence}--\eqref{est.espr} we have that
  \begin{equation*}
    \begin{aligned}
      \nu \int_m^{m+1}\!\!\!\!\big(\|\nabla \bw\trasl{t}^n(s)\|^2
      +\alpha^2 \|\nabla\hor\nabla &\bw\trasl{t}^n(s)\|^2 \big) \dif s
      = \nu\int_m^{m+1}\!\!\!\!\big( \|\nabla {\ve w}^n(s +t)\|^2 +
      \alpha^2\|\nabla\hor \nabla {\ve w}^n(s + t)||^2\big) \dif s \\
      &\leq \big(\| \bw^n(t)\|^2 +\alpha^2
      \|\nabla\hor\bw^n(t)\|^2\big) + \big( 1
      +\frac{1}{\nu\lambda_1}\big)K_1\\
      &\leq \big(\| \bw^n(s_0)\|^2 +\alpha^2
      \|\nabla\hor\bw^n(s_0)\|^2\big)e^{-\nu\lambda_1 (t-s_0)} + \big( 1
      +\frac{2}{\nu\lambda_1}\big)K_1\\
      &\leq k_0(s_0) + \big( 1 +\frac{2}{\nu\lambda_1}\big)K_1
    \end{aligned}
  \end{equation*}
  for $s_0\in \intaa{0}{t}$. Therefore, integrating the above
  inequality on the interval $\intaa{0}{1}$ in $s_0$, we get
  \begin{equation*} 
    \begin{aligned}
      \int_m^{m+1}\!\!\!\!\big(\|\nabla \bw^n\trasl(s)\|^2 +\alpha^2
      \|\nabla\hor\nabla\bw^n\trasl(s)\|^2\big) \dif s &\leq \frac{M_0^2}{\nu} +
      \big(1 + \frac{2}{\nu\lambda_1}\big)\frac{K_1}{\nu}
    \end{aligned}
  \end{equation*}
  for all $n$ and for each $m=0,1,2\ldots$ The above estimates imply
  that $\pg*{S(t)\bw^n}$ is bounded in $\cs L^\infty(0, \infty;
  V\hor)\cap \cs L^2\loc\pqt*{0, \infty; V\cap \cs H^2\hor}$.
  Following the same line of reasoning as in the proof of existence in \cite{BER:2012}, one can
  also prove that $S(t) \bw^n \in \cs H^1\loc\pqt*{0, \infty;
    V^*}$.
  Therefore, using Aubin--Lions compactness theorem, up to a subsequence, $S(t)\bw^n\rightarrow \ve\gamma(t)$ strongly in $\cs L^2\loc\pqt*{0,T;V\hor}$ as $n\rightarrow +\infty$, with $\ve\gamma(t)\in \cs L^\infty(0, \infty;
  V\hor)\cap \cs L^2\loc\pqt*{0, \infty; V\cap \cs H^2\hor}$.

  The same computations also show that, up to a subsequence,
  $\ve w^n \rightarrow \ve w$ strongly in $\cs L^2\loc\pqt*{0,T;V\hor}$ as $n\rightarrow +\infty$, with $\ve w\in \cs L^\infty(0, \infty;
  V\hor)\cap \cs L^2\loc\pqt*{0, \infty; V\cap \cs H^2\hor}$. By the continuity of
  $\uple{\tau\sep\ve w}\mapsto S(t)\bw$ in
  $\intaa{0}{+\infty}\times\cs L^2\loc\pqt{0,\infty;V\hor}$ and the uniqueness
  of the limit, we actually obtain that $S(t)\ve w^n \rightarrow \ve\gamma(t)=S(t)\ve w\in \mathcal W$. This ends the proof.
%
%
\end{proof}

\section{The fully space-periodic case} \label{sec.periodic}

In this section, in order to improve the regularity of the solutions,
we consider a torus as a space domain, i.e. a domain periodic in all
directions: $D=\set{\ve x \in \ns R^3 \tc -\mpi L<x_1, x_2, x_3<\mpi
  L}$, $L>0$, with $2\mpi L$ periodicity with respect to $\ve x$. This
setting enables us to perfom some computations (more precisely, the usage of some
test functions) that are not allowed in the presence of Dirichlet
boundary conditions; in such a way we retrive some additional
information on the horizontal components of  velocity
field, $\ve w\hor$, and on the pressure, $q$.

Function spaces are defined accordingly to our periodic setting, in
particular
\begin{align*}
  & \cs L^2(D) = \set{\phi\clnf D \rightarrow \ns R \text{ measurable,
    } 2\mpi L \text{ periodic in } x,
    \, \int_D \abs{\phi}^2 \dif \ve x < +\infty}\, , \\
  & \cs L^2_0(D) = \set{\phi \in \cs L^2(D) \text{ with zero mean with
      respect to } \ve x} \, .
\end{align*}

We can still consider the horizontal filtering and thus problem
\eqref{third.1}--\eqref{third.2}, and prove the existence of a unique
regular weak solution (defined like in Section \ref{sec.setting})
essentially as done in the presence of Dirichlet boundary conditions on the top and
bottom boundary $\mathit\Gamma$. 

\subsection{Continuity of the horizontal flow}

To prove the claimed continuity of $\ve w\hor$, we need the following two lemmas.

\begin{lemma} A pressure term $q$ in \eqref{third.1} corresponding to
  a regular weak solution to \eqref{third.1}--\eqref{third.2}
  satisfies $\nabla\hor q \in \cs L^2 \pt{0,T;\cs L^2(D)}$ for each
  $T>0$.
\end{lemma}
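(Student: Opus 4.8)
The plan is to recover the pressure from the momentum balance through the associated pressure Poisson problem and then to exploit, via the Fourier representation available on the torus, the fact that the horizontal filter smooths precisely in the two directions along which we differentiate. First I would take the divergence of \eqref{third.1}. Since $\nabla\cdot\ve w=0$ annihilates both $\nabla\cdot\partial_t\ve w$ and $\nu\,\nabla\cdot\Delta\ve w$, this yields the Poisson equation
\[
\Delta q=\nabla\cdot\hf{\ve f}-\nabla\cdot\nabla\cdot\hf{\pt{\ve w\otimes\ve w}}\,.
\]
In the fully periodic setting $\Delta$ is a Fourier multiplier, invertible on mean-zero functions; the right-hand side is a divergence, hence has zero mean, so I may select the mean-zero pressure and split $q=q_f+q_{NL}$ with $q_f=\Delta^{-1}\nabla\cdot\hf{\ve f}$ and $q_{NL}=-\Delta^{-1}\nabla\cdot\nabla\cdot\hf{\pt{\ve w\otimes\ve w}}$. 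All operators below are Fourier multipliers on $\T^3$ and hence commute, which is exactly what makes the periodic case convenient: no boundary terms appear and the Laplacian inverts cleanly.

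Next I would estimate $\nabla\hor$ of each piece. Writing $\hf{\pt{\ve w\otimes\ve w}}=A\hor^{-1}\pt{\ve w\otimes\ve w}$ and commuting the multipliers, $\nabla\hor q_{NL}=-\pt{\nabla\hor A\hor^{-1}}\pt{\Delta^{-1}\nabla\cdot\nabla\cdot}\pt{\ve w\otimes\ve w}$. The double-Riesz operator $\Delta^{-1}\partial_i\partial_j$ has symbol of modulus $\abs{k_i}\abs{k_j}/\abs{\ve k}^2\le 1$, so it is bounded on $\cs L^2$; the operator $\nabla\hor A\hor^{-1}$ has symbol of modulus $\abs{\ve k\hor}/\pt{1+\alpha^2\abs{\ve k\hor}^2}$, which is maximized at $\abs{\ve k\hor}=1/\alpha$ and gives $\norm{\nabla\hor A\hor^{-1}g}\le\frac{1}{2\alpha}\norm{g}$. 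Hence $\norm{\nabla\hor q_{NL}}\le\frac{C}{\alpha}\norm{\ve w\otimes\ve w}$. For the force term $\nabla\hor q_f$ carries a symbol of modulus $\abs{k_l}\abs{k_m}/\abs{\ve k}^2\le 1$, so $\norm{\nabla\hor q_f}\le C\norm{\hf{\ve f}}$, a finite time-independent constant because $\hf{\ve f}\in\cs L^2_0(D)$. The clean point is that the horizontal derivative symbol $\ve k\hor$ is exactly compensated by the decay of the filter symbol; this is also why the statement concerns $\nabla\hor q$ and not $\nabla q$, since for $\partial_3 q_{NL}$ the symbol of modulus $\abs{k_3}\abs{k_i}\abs{k_j}/\abs{\ve k}^2$ is not tamed by $A\hor^{-1}$.

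The remaining and genuinely delicate step is to show $\ve w\otimes\ve w\in\cs L^2\pt{0,T;\cs L^2(D)}$. The naive three-dimensional bound $\norm{w_iw_j}\le\norm{w_i}_{\cs L^4}\norm{w_j}_{\cs L^4}$ together with the isotropic Gagliardo--Nirenberg inequality would require $\nabla\ve w\in\cs L^3\pt{0,T;\cs L^2}$, which is not available. Instead I would use the anisotropic regularity furnished by Theorem~\ref{th.estimates}: coupling the two-dimensional Ladyzhenskaya inequality on the horizontal slices $x_3=\text{const}$ with a one-dimensional Sobolev bound in $x_3$ yields
\[
\norm{u}_{\cs L^4}^4\le C\,\norm{u}\pt{\norm{u}+\norm{\partial_3 u}}\pt{\norm{u}^2+\norm{\nabla\hor u}^2}\,.
\]
Applying this componentwise, the bound $\ve w\in\cs L^\infty\pt{0,T;V\hor}$ controls $\norm{\ve w}$ and $\norm{\nabla\hor\ve w}$ uniformly in time, while $\norm{\partial_3\ve w}\le\norm{\nabla\ve w}\in\cs L^2\pt{0,T}$; the resulting integrand is then dominated by $C\pt{1+\norm{\nabla\ve w(s)}}$, which is integrable on $\intca{0}{T}$. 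This gives $\int_0^T\norm{\ve w\otimes\ve w}^2\dif s<\infty$, and combined with the multiplier bounds above yields $\int_0^T\norm{\nabla\hor q}^2\dif s<\infty$. The main obstacle is precisely this last step: the failing isotropic estimate must be replaced by the anisotropic one, which is where the horizontal smoothing encoded in $\ve w\in\cs L^\infty\pt{0,T;V\hor}$ (i.e. $\nabla\hor\ve w\in\cs L^\infty_t\cs L^2_x$) enters decisively.
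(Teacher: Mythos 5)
Your proof is correct, and after the shared first move (taking the divergence of \eqref{third.1} to obtain the pressure Poisson equation) it follows a genuinely different route from the paper's. The paper strips the filter off by applying $A\hor$ to both sides, getting $-\Delta A\hor q=\nabla\cdot\pt[\big]{\nabla\cdot\pt{\ve w\otimes\ve w}}-\nabla\cdot\ve f$, then estimates the \emph{unfiltered} quadratic term in $\cs H^{-1}$ via H\"older and Gagliardo--Nirenberg, $\norm{\ve w\otimes\ve w}_{\cs H^{-1}}\le C\norm{\ve w}\,\norm{\nabla\ve w}$ (which needs only the energy-class regularity $\ve w\in\cs L^\infty\cs L^2\cap\cs L^2\cs H^1$), concludes $A\hor q\in\cs L^2\cs H^{-1}$, and from this reads off $\nabla\hor q\in\cs L^2\cs L^2$. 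You instead keep the filter on the right-hand side, split $q$ into forcing and nonlinear parts, and bound $\nabla\hor q$ directly in $\cs L^2$ by exact multiplier estimates (the double Riesz operator has symbol bounded by $1$, and $\nabla\hor A\hor^{-1}$ by $1/(2\alpha)$); the price is that you must show $\ve w\otimes\ve w\in\cs L^2\pt{0,T;\cs L^2(D)}$, which you obtain from an anisotropic Ladyzhenskaya-type inequality exploiting precisely $\ve w\in\cs L^\infty\pt{0,T;V\hor}$ together with $\nabla\ve w\in\cs L^2\pt{0,T;\cs L^2(D)}$; that inequality and the integrability bookkeeping are sound. The comparison is instructive: the paper's treatment of the nonlinearity is cheaper, but its final passage is exactly where the anisotropy must be confronted, since as a pure multiplier statement $A\hor q\in\cs H^{-1}$ does \emph{not} imply $\nabla\hor q\in\cs L^2$ --- the relevant symbol $\abs{\ve k\hor}\pt{1+\abs{\ve k}^2}^{1/2}/\pt{1+\alpha^2\abs{\ve k\hor}^2}$ is unbounded as $\abs{k_3}\to\infty$ at fixed $\ve k\hor$, so some vertical information on the source beyond the stated $\cs H^{-1}$ bound is implicitly being used there. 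Your route pays up front with the $\cs L^4$-in-space-and-time estimate on $\ve w$ (the genuinely anisotropic step, and the natural one given the available regularity), after which every operator bound is elementary and carries an explicit $1/\alpha$ constant; in effect you make rigorous, in $\cs L^2$, a step that the paper's $\cs H^{-1}$ accounting leaves implicit.
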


\begin{proof} Taking the divergence operator in \eqref{third.1} and
  using \eqref{third.2}, we obtain
  \begin{align*}
    -\Delta q = \nabla\cdot \pt*{ \nabla\cdot \pt[\big]{\hf{\ve w
          \otimes \ve w}} } - \nabla\cdot\hf{\ve f},
  \end{align*}
  and, by applying the operator $A\hor$, we deduce
  \begin{align*}
    -\Delta A\hor q = \nabla\cdot \pt*{ \nabla\cdot \pt{{\ve w \otimes
          \ve w}} } - \nabla\cdot{\ve f} \, ,
  \end{align*}
  so that
  \begin{equation} \label{eq:control-for-the-pressure}
    \begin{aligned}
      \norm{A\hor q}_{\cs H^{-1}(D)} &\leq C\pt*{ \norm{\ve w\otimes
          \ve
          w}_{\cs H^{-1}(D)} + \norm{\ve f}_{\cs H^{-2}(D)}}\\
      & \leq C \pt*{ \norm{\ve w}_{\cs L^2(D)} \norm{\nabla\ve w}_{\cs
          L^2(D)} + \norm{\ve f}_{\cs H^{-2}(D)}}
    \end{aligned}
  \end{equation}
  by elliptic estimates used along with $\norm{\ve w\otimes \ve
    w}_{\cs H^{-1}(D)} \leq C \norm{\ve w}_{\cs L^2(D)}
  \norm{\nabla\ve w}_{\cs L^2(D)}$.  This last control follows from
  the H\"older and the Gagliardo--Nirenberg inequalities: taking
  $\ve\varphi\in\cs H^1(D)$, we have that
  \begin{align*}
    \abs{\uple{\ve w\otimes\ve w\sep\ve\varphi}} \leq \norm{\ve
      w}_{\cs L^3}^2 \norm{\ve\varphi}_{\cs L^3} \leq C \norm{\ve
      w}_{\cs L^2} \norm{\nabla\ve w}_{\cs L^2} \norm{\ve\varphi}_{\cs
      H^1}\, ,
  \end{align*}
  which proves the bound. Thus, from relation
  \eqref{eq:control-for-the-pressure} we get
  \begin{align*}
    \norm{A\hor q}_{\cs L^2\cs H^{-1}} \leq C \pt*{ \norm{\ve w}_{\cs
        L^\infty\cs L^2} \norm{\nabla\ve w}_{\cs L^2\cs L^2} +
      \norm{\ve f}_{\cs L^2\cs H^{-2}}}\, ,
  \end{align*}
  and hence $A\hor q \in \cs L^2 \cs H^{-1}$, so that $\nabla\hor q
  \in \cs L^2 \cs L^2$, where  $\cs L^2 \cs L^2$ denotes $\cs L^2 \pt{0,T;\cs L^2(D)}$.
\end{proof}

\begin{lemma}
  If $\ve w$ is a regular weak solution to
  \eqref{third.1}--\eqref{third.2}, then $\partial_t \nabla\hor\ve
  w\hor \in \cs L^2 \pt{0,T;V^*}$ for every $T>0$.
\end{lemma}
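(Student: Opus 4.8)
The plan is to reconstruct $\partial_t\nabla\hor\ve w\hor$ directly from the evolution equation and to bound it in $V^*$ by testing against an arbitrary $\ve\phi\in V$, estimating the resulting dual norm in $\cs L^2(0,T)$ term by term. Writing the horizontal components ($i=1,2$) of \eqref{third.1} as $\partial_t w_i=\nu\Delta w_i-\partial_i q-[\nabla\cdot\hf{(\ve w\otimes\ve w)}]_i+\hf{f_i}$ and applying $\partial_k$ with $k\in\{1,2\}$, I would test the resulting identity against $\phi_i$, sum over $i,k\in\{1,2\}$, and integrate by parts once so that exactly one spatial derivative falls on $\ve\phi$; the goal is then to show that each contribution is controlled by $\norm{\ve\phi}_V$ times a factor lying in $\cs L^2(0,T)$. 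Throughout I would use repeatedly that the filter $\hf{\,\cdot\,}=A\hor^{-1}$ has a Fourier symbol depending only on $\xi\hor$, hence commutes with every derivative $\partial_1,\partial_2,\partial_3$ and is self-adjoint.

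For the pressure, the reason the statement is restricted to the horizontal components becomes transparent: after integrating $\partial_i$ by parts, $\sum_{i=1,2}\uple{\partial_k\partial_i q\sep\phi_i}=-\uple{\partial_k q\sep\nabla\hor\cdot\ve\phi\hor}$, and the divergence-free constraint $\nabla\hor\cdot\ve\phi\hor=-\partial_3\phi_3$ turns this into $\uple{\partial_k q\sep\partial_3\phi_3}$, bounded by $\norm{\nabla\hor q}\,\norm{\ve\phi}_V$. The previous lemma gives $\nabla\hor q\in\cs L^2\pt{0,T;\cs L^2(D)}$, so this term is admissible. The viscous contribution, after one integration by parts, equals $-\nu\uple{\partial_k\nabla w_i\sep\nabla\phi_i}$ and is bounded by $\nu\norm{\nabla\hor\nabla\ve w}\,\norm{\ve\phi}_V$, whose prefactor lies in $\cs L^2(0,T)$ by \eqref{est.espr}, i.e. by $\ve w\in\cs L^2(0,T;\cs H^2\hor)$. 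The forcing term $\partial_k\hf{f_i}=\hf{\partial_k f_i}$ is time-independent and bounded in $V^*$ by $\norm{\hf{\ve f}}\,\norm{\ve\phi}_V$, hence trivially in $\cs L^2(0,T)$.

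The main obstacle is the nonlinear term $\sum_l\partial_k\partial_l\hf{(w_iw_l)}$, where I would split according to whether the summation index $l$ is horizontal or vertical, exploiting the anisotropy of both the filter and the available regularity. For $l\in\{1,2\}$ I would move both derivatives onto the test function by self-adjointness, writing the pairing as $\uple{w_iw_l\sep A\hor^{-1}\partial_k\partial_l\phi_i}$; the multiplier $A\hor^{-1}\partial_k\partial_l$, of symbol $\xi_k\xi_l/(1+\alpha^2\abs{\xi\hor}^2)$, is bounded on every $\cs H^s$, so this is controlled by $\norm{\ve w\otimes\ve w}_{\cs H^{-1}}\norm{\ve\phi}_V\leq C\norm{\ve w}\,\norm{\nabla\ve w}\,\norm{\ve\phi}_V$, whose prefactor is in $\cs L^2(0,T)$ exactly as in the pressure lemma. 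For $l=3$ the multiplier $A\hor^{-1}\partial_k\partial_3$ is no longer bounded, so instead I would move only $\partial_3$ onto $\ve\phi$, leaving $A\hor^{-1}\partial_k(w_iw_3)$ paired with $\partial_3\phi_i$; since $A\hor^{-1}\partial_k$ is bounded on $\cs L^2$, this is estimated by $\norm{w_iw_3}_{\cs L^2}\norm{\ve\phi}_V$, and here the stronger regularity of the vertical component, $w\vrt\in\cs L^\infty(0,T;\cs H^1)\cap\cs L^2(0,T;\cs H^2)$, enters through $\norm{w_iw_3}_{\cs L^2}\leq\norm{w_i}_{\cs L^4}\norm{w_3}_{\cs L^4}$, which is an $\cs L^2(0,T)$ factor via $w_3\in\cs L^\infty(0,T;\cs L^4)$ and $w_i\in\cs L^2(0,T;\cs L^4)$. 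Collecting the four estimates shows that the $V^*$-norm of $\partial_t\nabla\hor\ve w\hor$ is dominated by an $\cs L^2(0,T)$ function, which is the assertion. The delicate point to get right is precisely this horizontal/vertical bookkeeping: it is what makes the argument succeed for $\ve w\hor$ while failing for the full $\ve w$, whose horizontal components lack the pure vertical second derivative that would be needed to treat $\partial_t\partial_k w_3$.
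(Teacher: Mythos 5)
Your proof is correct, but it takes a genuinely different route from the paper's. The paper stays with the strong form of the horizontal momentum equation \eqref{eq:h-component-time-derivative}: it shows that every term on its right-hand side already belongs to $\cs L^2\pt{0,T;\cs L^2(D)}$ (the pressure gradient $\nabla\hor q$ by the preceding lemma; the convective term by discarding the filter, which is an $\cs L^2$-contraction, and invoking $\norm{(\ve w\cdot\nabla)\ve w}\leq C\norm{\ve w}\,\norm{\nabla\ve w}$), concludes $\partial_t\ve w\hor\in\cs L^2\cs L^2$, and then observes that one further spatial derivative lands in $V^*$, which even yields the full gradient $\partial_t\nabla\ve w\hor\in\cs L^2 V^*$. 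You never measure the differentiated equation in $\cs L^2(D)$: you apply $\partial_k$, test against $\ve\phi\in V$, integrate by parts, and exploit the anisotropy quantitatively through the bounded Fourier multipliers $A\hor^{-1}\partial_k$ and $A\hor^{-1}\partial_k\partial_l$ ($k,l$ horizontal), together with the horizontal/vertical splitting of the convective term. What your route buys is robustness: every estimate you use ($\norm{\ve w\otimes\ve w}_{\cs H^{-1}}\leq C\norm{\ve w}\norm{\nabla\ve w}$, which is exactly the bilinear bound proved in the pressure lemma; $\norm{w_iw_3}\leq\norm{w_i}_{\cs L^4}\norm{w_3}_{\cs L^4}$ via $w\vrt\in\cs L^\infty\pt{0,T;\cs H^1}$; $\nabla\hor\nabla\ve w\in\cs L^2\cs L^2$) follows verbatim from the definition of regular weak solution. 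By contrast, the paper's pointwise-in-time $\cs L^2(D)$ bounds on the convective term and on $\Delta\ve w\hor$ (the latter containing $\partial_3^2\ve w\hor$, which neither $\ve w\in\cs L^2\cs H^2\hor$ nor $w\vrt\in\cs L^2\cs H^2$ controls) are not immediate consequences of that definition; conversely, when those bounds are granted, the paper's route is shorter and gives the stronger full-gradient conclusion.

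One caveat on your closing remark: the obstruction you name for the vertical component is not real at the level of your own scheme. For $i=3$ the delicate term is $\partial_k\partial_3\hf{(w_3w_3)}$, and your $l=3$ device (move $\partial_3$ onto $\phi_3$, use the $\cs L^2$-boundedness of $A\hor^{-1}\partial_k$ and $w_3\in\cs L^\infty\cs H^1\hookrightarrow\cs L^\infty\cs L^4$) handles it just as well, with the pressure and viscous terms causing no new difficulty; so your argument appears to extend to $\partial_t\nabla\hor\ve w\in\cs L^2 V^*$ in full, which is more than the paper claims to have. This affects only your side remark, not the validity of your proof of the lemma as stated, but the claimed horizontal/vertical dichotomy should either be substantiated or dropped.
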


\begin{proof}
  From \eqref{third.1}, we have
  \begin{equation} \label{eq:h-component-time-derivative}
    \partial_t\ve w\hor = \hf{\ve f}\hor + \nu\Delta \ve w\hor
    -\nabla\hor q - \pq*{\nabla\cdot\pt{\hf{\ve w\otimes\ve w}}}\hor.
  \end{equation}
  Considering the last term in the righthand side of the above
  equation, we have
  \begin{align*}
    \norm*{\pq*{\nabla\cdot\pt{\hf{\ve w\otimes\ve w}}}\hor}_{\cs
      L^2(D)} \leq \norm*{{\nabla\cdot\pt{{\ve w\otimes\ve w}}}} \leq
    \norm*{(\ve w\cdot \nabla) \ve w} \leq C\norm{\ve w}_{\cs L^2(D)}
    \norm{\nabla\ve w}_{\cs L^2(D)}\, ,
  \end{align*}
  and therefore
  \begin{align*}
    \norm*{\pq*{\nabla\cdot\pt{\hf{\ve w\otimes\ve w}}}\hor}_{\cs
      L^2\cs L^2} \leq C\norm{\ve w}_{\cs L^\infty\cs L^2}
    \norm{\nabla\ve w}_{\cs L^2\cs L^2}\, ,
  \end{align*}
  which yields $\pq*{\nabla\cdot\pt{\hf{\ve w\otimes\ve w}}}\hor \in
  \cs L^2\cs L^2$.

  Since all the other terms on the righthand side of
  \eqref{eq:h-component-time-derivative} have the same regularity $\cs
  L^2\cs L^2$, we deduce that $\partial_t\ve w\hor\in \cs L^2 \cs
  L^2$, or $\partial_t \nabla\ve w\hor \in \cs L^2 V^*$; in
  particular, $\partial_t \nabla\hor\ve w\hor \in \cs L^2 V^*$.
\end{proof}

\smallskip

Now, we are ready to prove 
 Proposition~\ref{prop.wh}
(continuity of $\ve w\hor$).

\begin{proof}[Proposition~\ref{prop.wh}]
  Since $\ve w \in \cs L^2 (V\cap \cs H^2\hor)$, thus $\nabla\hor\ve
  w\in\cs L^2 V$, and $\partial_t \nabla\hor\ve w\hor \in \cs L^2
  V^*$, we obtain by interpolation (see
  \cite{Lio-Mag:68,SELL-YOU:2002}) that $\nabla\hor\ve w\hor \in \fs
  C^0 H$, i.e. $\ve w\hor \in \fs C^0 V\hor$. This implies the
  continuity of the map ${\ve w}_0 \mapsto \ve w\hor (t)$, with $\ve
  w_0\in V\hor$.
\end{proof}

\subsection{A higher order estimate}

We refer to the beginning of Section \ref{sec.estimates} for the
definitions of $\Lambda\hor, k_1(t)$, and set $\lambda_1=L^{-2}$
(first eigenvalue of the Laplace operator $-\Delta$ on $D$ fully
periodic, and Poincar\'e constant) and
\begin{equation} \label{eq:utility-quantities}
  \begin{aligned}
    & k_2(t) =\|\nabla \bw(t)\|^2+ \alpha^2\|\nabla \nabla\hor \bw(t)\|^2\, , \\
    & K_2 = \frac{3}{\nu}\min \pg*{\frac{\norm{\Lambda\hor^{-1}\ve
          f}^2}{\alpha^2} \sep
      \norm{\ve f}^2} \, ,\\
    & k_3(t) = K_2 + \frac{C k_1(t)^3}{\alpha^8\nu^3}\Big(
    \frac{1}{\alpha^4
      \lambda_1^3}+ \frac{k_1(t)^2}{\nu^4}\Big)\, ,\\
    & k_4(t) = k_2(t) + \frac{k_3(t)}{\nu\lambda_1}\, .
  \end{aligned}
\end{equation}

We state and prove the following result.

\begin{theorem} \label{thm:estimate-H2_h} Let $\bw$ be a regular weak
  solution to \eqref{third.1}--\eqref{third.2} in the time interval
  $\intca{0}{\infty}$; then we have that
  \begin{equation} \label{eq:final-estimate-H2h} \|\nabla
    \bw(t+r)\|^2+\alpha^2\| \nabla\nabla\hor \bw(t+r)\|^2\leq
    k_2(t)\nep^{-\nu\lambda_1 (t+r)} + \frac{k_3(t)}{\nu\lambda_1}(1 -
    \nep^{-\nu\lambda_1(t+r)}) \leq k_4(t)
  \end{equation}
  for each $t, r>0$.
\end{theorem}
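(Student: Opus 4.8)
The plan is to mirror the energy argument of Theorem~\ref{th.estimates}, but \emph{one differential order higher}. I work with the Galerkin approximations $\bw_m$ and test the first equation in \eqref{third.1} against the multiplier $-\Delta A\hor\bw_m$ (equivalently $A\hor(-\Delta\bw_m)$). This is precisely the test function that the fully periodic geometry now permits, since every integration by parts proceeds without boundary contributions (it would generate uncontrolled terms on $\mathit\Gamma$ under Dirichlet conditions). Because $-\Delta$, $A\hor$ and $\nabla\cdot$ all commute and are self-adjoint under periodicity, the time term produces $\tfrac12\deriv{}{t}\pt*{\norm{\nabla\bw_m}^2+\alpha^2\norm{\nabla\nabla\hor\bw_m}^2}=\tfrac12\deriv{}{t}k_2(t)$ evaluated on $\bw_m$, so that the quantity $k_2$ of \eqref{eq:utility-quantities} appears on the left exactly as desired.

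For the linear terms I would verify the following. The pressure term drops, because $-\Delta A\hor\bw_m$ is divergence free on the torus. The viscous term yields the coercive contribution $\nu\pt*{\norm{\Delta\bw_m}^2+\alpha^2\norm{\nabla\hor\Delta\bw_m}^2}$, which by the Poincar\'e inequality dominates $\nu\lambda_1 k_2(t)$ and will drive the exponential decay. The forcing term simplifies, using $\hf{\ve f}=A\hor^{-1}\ve f$ and the commuting self-adjoint operators, to $\uple{\ve f\sep-\Delta\bw_m}$; Cauchy--Schwarz and Young's inequality then absorb it into a fraction of the viscous dissipation at the cost of $K_2$, the two expressions in the minimum defining $K_2$ arising according to whether one pays with $\nu\norm{\Delta\bw_m}^2$ (giving $\norm{\ve f}^2$) or with $\nu\alpha^2\norm{\nabla\hor\Delta\bw_m}^2=\nu\alpha^2\norm{\Lambda\hor\Delta\bw_m}^2$ (giving $\norm{\Lambda\hor^{-1}\ve f}^2/\alpha^2$).

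The crux is the nonlinear term. Unlike in Theorem~\ref{th.estimates}, where testing against $A\hor\bw_m$ annihilated it by antisymmetry, here the same reduction leaves only $\uple{\nabla\cdot\pt{\bw_m\otimes\bw_m}\sep-\Delta\bw_m}=\uple{(\bw_m\cdot\nabla)\bw_m\sep-\Delta\bw_m}$, the genuinely dangerous trilinear form of the $3$D theory, which does \emph{not} cancel. The idea is to exploit the anisotropic smoothing already in hand: Theorem~\ref{th.estimates} furnishes the uniform bound $\norm{\bw(t)}^2+\alpha^2\norm{\nabla\hor\bw(t)}^2\leq k_1(t)$, so the horizontal first derivatives are controlled with a factor $1/\alpha^2$. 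Using anisotropic Gagliardo--Nirenberg / Ladyzhenskaya-type interpolation inequalities to distribute the three factors of the cubic term, I would place the top-order factor ($\norm{\Delta\bw_m}$ or $\norm{\nabla\hor\Delta\bw_m}$) in the slot absorbed by Young's inequality into the viscous dissipation, while the remaining factors are estimated by the horizontally controlled quantity $k_1(t)$. Tracking the resulting powers of $\alpha$, $\nu$ and $\lambda_1$ then reproduces exactly the $k_1(t)^3$ and $k_1(t)^5$ contributions collected in $k_3(t)$. I expect this step --- selecting the interpolation that spends dissipation only on the vertical derivative while leaning on the strong horizontal control, and carrying out the bookkeeping of constants --- to be the main obstacle, as it is here that the essential asymmetry between horizontal and vertical regularity must be made quantitative.

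Collecting all the contributions gives the differential inequality $\deriv{}{t}k_2^m(t)+\nu\lambda_1 k_2^m(t)\leq k_3(t)$ at the Galerkin level (with $k_2^m$ built from $\bw_m$). Gronwall's lemma on $\intcc{t}{t+r}$ then yields the first inequality in \eqref{eq:final-estimate-H2h}, and passing to the limit $m\to\infty$ --- using the convergences from \cite{BER:2012} exactly as in the proof of Theorem~\ref{th.estimates}, together with the weak lower semicontinuity of the norms --- transfers the bound to $\bw$. Finally, the estimate $\leq k_4(t)$ is immediate from $\nep^{-\nu\lambda_1(t+r)}\leq 1$ and $1-\nep^{-\nu\lambda_1(t+r)}\leq 1$, which bound the right-hand side by $k_2(t)+k_3(t)/(\nu\lambda_1)=k_4(t)$.
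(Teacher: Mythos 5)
Your overall skeleton is exactly the paper's: the multiplier $-\Delta A\hor\bw$ (justified through the Galerkin scheme, which the periodic setting permits), the identification of the time term with $k_2$, the vanishing of the pressure contribution, the coercive viscous term, the two-option Young estimate for the forcing producing $K_2$, and the Gronwall-plus-passage-to-the-limit ending all match the paper's proof. The gap is in the one step you yourself flag as the main obstacle: the treatment of $\uple{(\bw\cdot\nabla)\bw\sep -\Delta\bw}$. Your plan --- put the top-order factor into the dissipation and estimate ``the remaining factors'' by $k_1(t)$ --- cannot work as stated, because after extracting $\norm{\Delta\bw}$ the remaining factors are $\bw$ and the \emph{full} gradient $\nabla\bw$, and $k_1(t)$ controls only $\norm{\bw}$ and $\norm{\nabla\hor\bw}$: the vertical derivatives $\D_3\bw\hor$ of the horizontal components are \emph{not} bounded by $k_1(t)$ (they are controlled only in $\cs L^2$ in time, through the dissipation). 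If you proceed anyway by Gagliardo--Nirenberg you get $\abs{\uple{(\bw\cdot\nabla)\bw\sep\Delta\bw}}\leq C\norm{\nabla\bw}^{3/2}\norm{\Delta\bw}^{3/2}\leq \tfrac{\nu}{2}\norm{\Delta\bw}^2+C\nu^{-3}\norm{\nabla\bw}^6$, i.e.\ a term of order $y(t)^3$ in the unknown $y=k_2$ on the right-hand side; this is the classical three-dimensional obstruction, and Gronwall does not close it globally in time.

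The missing idea is a structural preprocessing of the trilinear term before any interpolation. The paper first integrates by parts, $\abs{\uple{(\bw\cdot\nabla)\bw\sep\Delta\bw}}=\abs{\uple{\nabla\pq{(\bw\cdot\nabla)\bw}\sep\nabla\bw}}$, then uses the cancellations $\uple{(\bw\cdot\nabla)\nabla\hor\bw\sep\nabla\hor\bw}=0$ and $\uple{(\bw\cdot\nabla)\D_3\bw\sep\D_3\bw}=0$, so that only terms in which the derivative falls on the convecting field survive; after splitting those into horizontal and vertical components and invoking the divergence-free identity $\D_3 w_3=-\nabla\hor\cdot\bw\hor$, \emph{every} surviving cubic term contains at least one purely horizontal factor ($\nabla\hor\bw\hor$, $\nabla\hor w_3$, or $\D_3 w_3$ rewritten horizontally). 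Only then does a scheme like yours apply, and in a slightly different form than you describe: the horizontal factor is placed in $\cs L^2$ and bounded through $k_1(t)$, while the \emph{two} remaining first-order factors (which may be vertical) go in $\cs L^4$ and are interpolated by Gagliardo--Nirenberg against $\norm{\Delta\bw}$ (or $\norm{\nabla\nabla\hor\bw}$) and $\norm{\bw}$; Young's inequality with exponents $(8/7,8)$ then yields the $\nu^{-7}\norm{\bw}^2(\cdot)^8$ terms, which is where the $k_1(t)^5/\nu^7$ contribution (and, from the purely horizontal $A_1$-type term, the $k_1(t)^3$ contribution) in $k_3(t)$ actually comes from. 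Without the cancellation-plus-divergence step, the asymmetry you correctly identify as essential cannot be made quantitative, and the differential inequality with a time-integrable right-hand side $k_3(t)$ is never reached.
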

\begin{proof} In the space periodic setting we can use $-\Delta
  A\hor\bw$ as test function for Equation \eqref{third.1} (now
  formally, but the procedure actually goes through the Galerkin
  approximation).

  First, observe that
  \begin{align*}
    \abs{\uple{{\ve f}\sep \Delta\ve w}} &\leq \begin{cases}
      &\norm{\Lambda\hor^{-1}\ve f} \, \norm{\Lambda\hor\Delta\ve w} \\*[1em]
      &\norm{\ve f} \, \norm{\Delta\ve w}
    \end{cases}
    \leq \begin{cases} &\dfrac{\norm{\Lambda\hor^{-1}\ve
          f}^2}{\nu\alpha^2} +
      \dfrac{\nu\alpha^2\norm{\Delta\nabla\hor\ve w}^2}{4} \\*[1em]
      &\dfrac{3\norm{\ve f}^2}{2\nu} + \dfrac{\nu\norm{\Delta \ve
          w}^2}{6}
    \end{cases} \\
    & \leq \dfrac{1}{2}K_2+
    \frac{\nu\alpha^2}{4}\norm{\Delta\nabla\hor\ve w}^2 +
    \frac{\nu}{6}\norm{\Delta\ve w}^2\, .
  \end{align*}
  Thus, testing the equation \eqref{third.1} against $-\Delta A\hor
  \bw$, we get
  \begin{equation} \label{eq:second-estimate-w}
    \begin{aligned}
      \frac{1}{2} \deriv{}{t}\big( \|\nabla \bw\|^2 +& \alpha^2\|
      \nabla \nabla\hor \bw\|^2\big) +
      \nu \big( \|\Delta \bw\|^2 + \alpha^2 \| \Delta \nabla\hor  \bw\|^2\big)\\
      &\leq \big|\big(\bbf, \overline{A\hor \Delta \bw}^h\big)\big| +
      \big|\big(\nabla \cdot (\bw\otimes \bw),
      \overline{A\hor \Delta \bw}^h\big)\big| \\
      &\leq \dfrac{1}{2}K_2+
      \frac{\nu\alpha^2}{4}\norm{\Delta\nabla\hor\ve w}^2 +
      \frac{\nu}{6}\norm{\Delta\ve w}^2 + \big|\big( (\bw\cdot \nabla)
      \bw, \Delta \bw \big)\big| \, .
    \end{aligned}
  \end{equation}

  Take into account the nonlinear term in the righthand side of the
  above estimate. We have that
  \begin{equation} \label{eq:terms-A-B}
    \begin{aligned}
      \big|\big( (\bw\cdot \nabla) \bw, \Delta \bw \big)\big| &=
      \big|\big(\nabla \big[(\bw\cdot \nabla) \bw\big], \nabla \bw
      \big)\big|
      =\Big|\int_{D} \nabla  \big[(\bw\cdot \nabla) \bw\big] : \nabla \bw dx\Big| \\
      &\leq \big|\big(\nabla\hor \big[(\bw\cdot \nabla) \bw\big],
      \nabla\hor \bw \big)\big| + \big|\big(\partial\vrt
      \big[(\bw\cdot \nabla) \bw\big],
      \D\vrt \bw \big)\big| \\
      &= \big|\big( (\nabla\hor \bw\cdot \nabla) \bw, \nabla\hor \bw
      \big)\big| + \big|\big((\D\vrt\bw\cdot \nabla
      ) \bw, \D\vrt\bw \big)\big|\\
      & =: A + B\, ,
    \end{aligned}
  \end{equation}
  where in the last step we used the following relations
  \begin{equation*}
    \big((\bw\cdot \nabla) \nabla\hor \bw, \nabla\hor \bw \big)=0 \qquad
    \textrm{ and } \qquad
    \big((\bw \cdot \nabla) \D\vrt\bw,
    \D\vrt\bw \big) =0\, .
  \end{equation*}
  Therefore, exploiting \cite[\S 4, (4.12)]{BER:2012} we obtain
  \begin{equation*}
    \begin{aligned}
      A &= \Big|\Big( \big([\nabla\hor \bw]\hor\cdot \nabla\hor\big)
      \bw +
      [\nabla\hor \bw]_3 \D\vrt\bw, \nabla\hor \bw \Big)\Big|\\
      &\leq \Big|\Big( \big(\nabla\hor \bw\hor\cdot \nabla\hor\big)
      \bw, \nabla\hor \bw \Big)\Big| + \Big|\Big(\nabla\hor
      w_3\D\vrt\bw,
      \nabla\hor \bw \Big)\Big| \\
      &=: A_1 + A_2\, ,
    \end{aligned}
  \end{equation*}
  where {$[ \, \cdot\, ]\hor$ and $[\, \cdot\, ]_3$} indicate,
  respectively, the horizontal and vertical components of the
  considered vector fields.

  Next, we estimate the nonlinear terms $A_1$, $A_2$ and $B$ defined
  above.  First, we estimate $A_1$ and we get
  \begin{equation} \label{eq:term-A1}
    \begin{aligned}
      A_1 &\leq \int_{D} |\nabla\hor \bw\hor||\nabla\hor \bw|^2 \\
      &\leq \|\nabla\hor \bw \|^2_{\cs L^4} \|\nabla\hor \bw\hor\|\\
      &\leq \|\nabla \nabla\hor \bw\|^{3/2}\| \nabla\hor \bw\|^{1/2}
      \|\nabla\hor \bw\hor\|\\
      &\leq \frac{1}{4}\nu\alpha^2\lambda_1 \|\nabla \nabla\hor
      \bw\|^2 + \frac{C}{\nu^3\alpha^6\lambda_1^3}
      \| \nabla\hor \bw\|^2\| \nabla\hor \bw\hor\|^4\\
      &\leq \frac{1}{4}\nu\alpha^2 \|\Delta \nabla\hor \bw\|^2 +
      \frac{C}{\nu^3\alpha^6\lambda_1^3}\| \nabla\hor \bw\|^6\, .
    \end{aligned}
  \end{equation}
  For the term $A_2$ we have the following control
  \begin{equation} \label{eq:term-A2}
    \begin{aligned}
      A_2&\leq \int_{D} |\nabla\hor \bw| |\D_3 \bw| |\nabla\hor w_3|\\
      & \leq \int_{D} |\nabla \bw|^2 |\nabla\hor w_3|\\
      &\leq \|\nabla \bw\|^2_{\cs L^4} \|\nabla\hor w_3\|\\
      &\leq C \|\Delta  \bw\|^{7/4}\| \bw\|^{1/4} \| \nabla\hor w_3\| \\
      &\leq \frac{1}{6}\nu\|\Delta \bw\|^2 + \frac{C}{\nu^7} \|
      \bw\|^2 \|\nabla\hor w_3\|^8\, ,
    \end{aligned}
  \end{equation}
  where we used the Gagliardo--Nirenberg inequality $\| D^j \bw\|_{\cs
    L^p}\leq C \|D^m \bw\|^{\alpha}\|_{\cs L^r}\|\bw\|_{\cs
    L^q}^{1-\alpha}$, with $j=1$, $p=4$, $m=r=q=2$ and $\alpha=7/8$,
  and the Young's inequality.

  Let us now consider the term $B$ in \eqref{eq:terms-A-B}. We have
  that
  \begin{equation*}
    \begin{aligned}
      B &= \Big|\Big( \big([\D_3 \bw]\hor\cdot \nabla\hor \big) \bw +
      [\D_3 \bw]_3\D_3 \bw, \D_3 \bw \Big) \Big| \\
      &\leq \Big|\Big( \big(\D_3 \bw\hor\cdot \nabla\hor \big) \bw,
      \D_3 \bw \Big) \Big| + \Big|\Big( \D_3 w_3\D_3 \bw, \D_3 \bw
      \Big) \Big| \, .
    \end{aligned}
  \end{equation*}
  Now, arguing as in the case of $A_2$, the term $B$ can be controlled
  as follows.
  \begin{equation} \label{eq:term-B}
    \begin{aligned}
      B & \leq \int_{D} |\D_3 \bw\hor| |\D_3 \bw||\nabla\hor \bw|
      +  \int_{D} |\D_3 w_3| |\D_3 \bw|^2 \\
      & \leq \|\D_3 \bw\|^2_{\cs L^4}\|\nabla\hor \bw\| + \|\D_3
      \bw\|^2_{\cs L^4} \|\D_3 w_3\| \\
      &\leq \|\nabla \bw\|^2_{\cs L^4}\big( \|\nabla\hor \bw\| + \|\D_3 w_3\|\big) \\
      &\leq C \|\Delta \bw\|^{7/4}\| \bw\|^{1/4}
      \big( \|\nabla\hor \bw\| + \|\D_3 w_3\|\big) \\
      &\leq \frac{1}{6}\nu\|\Delta \bw\|^2 + \frac{C}{\nu^7} \|
      \bw\|^2 \big( \|\nabla\hor \bw\| + \|\D_3 w_3\|\big)^8\, .
    \end{aligned}
  \end{equation}

  Collecting \eqref{eq:second-estimate-w}, \eqref{eq:term-A1},
  \eqref{eq:term-A2}, and \eqref{eq:term-B}, we obtain
  \begin{equation} \label{est.H2h}
    \begin{split}
      & \frac{1}{2} \deriv{}{t}\big( \|\nabla \bw \|^2 +
      \alpha^2\|\nabla\hor \nabla \bw\|^2\big) +
      \nu \big( \|\Delta \bw\|^2 + \alpha^2 \| \nabla\hor  \Delta \bw\|^2\big)\\
      & \qquad \leq \frac{1}{2}\nu \big( \|\Delta \bw\|^2 +
      \alpha^2\|\Delta \nabla\hor \bw\|^2 \big) +
      \frac{C}{\nu^3\alpha^6 \lambda_1^3} \| \nabla\hor \bw\|^6
      +  \frac{C}{\nu^7} \| \bw\|^2 \|\nabla\hor w_3\|^8\\
      & \qquad \qquad + \frac{C}{\nu^7} \| \bw\|^2 \big( \|\nabla\hor
      \bw\| +
      \|\D_3 w_3\|\big)^8 +     \frac{K_2}{2} \\
      & \qquad \leq \frac{1}{2}\nu \big( \|\Delta \bw\|^2 +
      \alpha^2\|\Delta \nabla\hor \bw\|^2 \big) + \frac{K_2}{2} +
      \frac{C k_1(t)^3}{\alpha^8\nu^3}\Big( \frac{1}{\alpha^4
        \lambda_1^3}+ \frac{k_1(t)^2}{\nu^4}\Big) \, ,
    \end{split}
  \end{equation}
  where in the last step we have used \eqref{est.LinfVh} and $\D_3 w_3
  = -\nabla\hor \cdot \bw\hor$ (which imply $\bw \in \cs
  L^\infty\pt*{0,\infty;V\hor}$ as well as $w_3 \in \cs
  L^\infty\pt*{0,\infty; \cs H^1}$). Thanks to the Poincar\'e's
  inequality, we easily reach
  \begin{equation} \label{est.y} y'(t)+ \nu \lambda_1 y(t) \leq
    k_3(t)\, ,
  \end{equation}
  having set
  \begin{equation} \label{def.y} y(t)=\|\nabla \bw(t)\|^2 + \alpha^2
    \|\nabla \nabla\hor \bw(t)\|^2\, .
  \end{equation}
  Hence, by applying Gronwall's inequality in the interval
  $\intcc{t}{t+r}$, we infer that
  \begin{gather*}
    \|\nabla \bw(t+r)\|^2+\alpha^2\| \nabla\nabla\hor \bw(t+r)\|^2\leq
    k_2(t)\nep^{-\nu\lambda_1(t+r)} + \frac{k_3(t)}{\nu\lambda_1}(1 -
    \nep^{-\nu\lambda_1(t+r)})\, ,
  \end{gather*}
  and finally
  \begin{gather*}
    \|\nabla \bw(t+r)\|^2+\alpha^2 \|\nabla \nabla\hor \bw(t+r)\|^2
    \leq k_4(t)\, ,
  \end{gather*}
  by definition of $k_4(t)$. Thus, the conclusion follows.
\end{proof}

\medskip

\textbf{Acknowledgement.} The authors are members of the Gruppo Nazionale per
l'Analisi Mate\-ma\-tica, la Probabilit\`a e le loro Applicazioni (GNAMPA)
of the Istituto Nazionale di Alta Matematica (INdAM).

Luca Bisconti, Dip. di Matematica e Informatica ``U. Dini'', Universit\`a degli Studi di Firenze, Via S. Marta 3, I-50139, Firenze, Italia.

e-mail: luca.bisconti@unifi.it

\smallskip

Davide Catania, DICATAM, Sezione Matematica, Universit\`a degli Studi di Brescia, Via Valotti 9, I-25133, Brescia, Italia.

e-mail: davide.catania@unibs.it

\end{document}